\newtheorem{thm}{Theorem}[section]
\newtheorem{lem}[thm]{Lemma}
\newtheorem{cor}[thm]{Corollary}
\newtheorem{pro}[thm]{Proposition}
\newtheorem{ex}[thm]{Example}
\newtheorem{rmk}[thm]{Remark}
\newtheorem{defi}[thm]{Definition}
\newcommand {\emptycomment}[1]{}
\newcommand{\be }{\begin{equation}}
\newcommand{\ee }{\end{equation}}
\newcommand{\huaL}{\mathcal{L}}
\newcommand{\huaO}{{\mathcal{O}}}
\newcommand{\frks}{\mathfrak s}
\newcommand{\Id}{\rm{Id}}
\newcommand{\br}[1]{   [ \cdot,    \cdot  ]   }
\newcommand{\gl}{\mathfrak {gl}}
\newcommand{\ad}{\mathrm{ad}}
\begin{document}

\title[On Hom-pre-Poisson algebras]{On Hom-pre-Poisson algebras
}

\author{Shanshan Liu}
\address{College of Mathematics and Systems Science, Shandong University of Science and Technology, Qingdao 266590, Shandong, China}
\email{shanshanmath@163.com}

\author{Abdenacer Makhlouf}
\address{University of Haute Alsace, IRIMAS- D\'epartement  de Math\'ematiques, Universit\'e de Haute Alsace, France}
\email{abdenacer.makhlouf@uha.fr}

\author{Lina Song}
\address{Department of Mathematics, Jilin University, Changchun 130012, Jilin, China}
\email{songln@jlu.edu.cn}


\begin{abstract}
In this paper, first we discuss  Hom-pre-Poisson algebras and their relationships with  Hom-Poisson algebra. Then we introduce  the notion of a Hom-pre-Gerstenhaber algebra and show that a Hom-pre-Gerstenhaber algebra gives rise to a Hom-Gerstenhaber algebra. Moreover, we consider  Hom-dendriform formal deformations of Hom-zinbiel algebras and show that Hom-pre-Poisson algebras are the corresponding semi-classical limits. Furthermore, we consider  Hom-$\huaO$-operators on Hom-Poisson algebras and study their  relationships with Hom-pre-Poisson algebras. Finally, we define the notion of  dual-Hom-pre-Poisson algebra and show that a Hom-average-operator on a Hom-Poisson algebra naturally gives rise to a dual-Hom-pre-Poisson algebra.
\end{abstract}

\footnotetext{{\it{MSC}}: 16T25, 17B62, 17B99.}
\keywords{Hom-pre-Poisson algebra, Hom-pre-Gerstenhaber algebra, dual-Hom-pre-Poisson algebra, Hom-average-operator }

\maketitle
\vspace{-5mm}
\tableofcontents

\allowdisplaybreaks


\section{Introduction}
The notion of a Hom-Lie algebra was introduced by Hartwig, Larsson
and Silvestrov in \cite{HLS} as part of a study of deformations of
the Witt and the Virasoro algebras. In a Hom-Lie algebra, the Jacobi
identity is twisted by a linear map (homomorphism), and it is called the  Hom-Jacobi identity. Different types of Hom-algebras were introduced and widely studied.  Moreover, the bialgebra theory for Hom-algebras was deeply studied in \cite{Cai-Sheng,MS3,sheng1,Yao1}.

Pre-Lie algebras (also called  left-symmetric algebras,
quasi-associative algebras, Vinberg algebras and so on) are a
class of nonassociative algebras that  appeared in
many fields in mathematics and mathematical physics.
See  the survey \cite{Pre-lie algebra in geometry} and the references therein for more details. The notion of a left-symmetric bialgebra was introduced in \cite{Bai}. The author also introduced the notion of   $\frks$-matrix   to produce left-symmetric bialgebras.
The notion of a Hom-pre-Lie algebra was introduced in \cite{MS2}  and play important roles in the study of Hom-Lie bialgebras and Hom-Lie 2-algebras \cite{sheng1,SC}. Recently, Hom-pre-Lie algebras were studied from several aspects. Cohomologies of Hom-pre-Lie algebras were studied in \cite{LST}; The geometrization of Hom-pre-Lie algebras was studied in \cite{Qing}; Universal $\alpha$-central extensions of Hom-pre-Lie algebras were studied in \cite{sunbing}; Hom-pre-Lie bialgebras were studied in \cite{LSS,QH}

The notion of a pre-Poisson algebra was introduced by Aguiar in \cite{AGUIAR} , which combines a zinbiel algebra and a pre-Lie algebra such that some compatibility conditions are satisfied. zinbiel algebras were introduced by Loday in \cite{Loady} in his study of the
algebraic structure behind the cup product on the cohomology groups of a Leibniz algebra. A pre-Poisson algebra gives
rise to a Poisson algebra naturally through the sub-adjacent commutative associative algebra of the zinbiel algebra and the sub-adjacent Lie algebra of the pre-Lie algebra. Conversely, an $\huaO$-operator action on a Poisson algebra gives rise to a pre-Poisson algebra. In \cite{LJF}, the author introduced the notion of a pre-$F$-manifold algebra, which also contains a zinbiel algebra and a pre-Lie algebra, such that some compatibility conditions are satisfied.  Aguiar's  pre-Poisson algebras are pre-$F$-manifold algebras as special cases. The notion of a Hom-Poisson algebra was introduced by Makhlouf and Silvestrov in \cite{MS1}, which combines a commutative Hom-associative algebra and a Hom-Lie algebra such that a Hom-Leibniz identity is satisfied. In \cite{Yao3}, the author introduced the notion of a non-commutative Hom-Poisson algebra, which combines a Hom-associative algebra (not necessarily commutative)  and a Hom-Lie algebra such that a Hom-Leibniz identity is satisfied. A Hom-type version of pre-$F$-manifold algebras studied in \cite{LJF} was considered recently  in \cite{BCMM}.

The purpose of this paper is to give a systematic and a specific study of  Hom-pre-Poisson algebras. We deal with the relationships between   Hom-pre-Poisson algebras and  Hom-pre-Poisson algebras, as well as  Hom-pre-Gerstenhaber algebras and Hom-pre-Gerstenhaber algebras. We introduce the notion of Hom-dendriform formal deformations of Hom-zinbiel algebras and show that Hom-pre-Poisson algebras are the corresponding semi-classical
limits. We give the notion of Hom-$\huaO$-operators on Hom-Poisson algebras and study its relation with Hom-pre-Poisson algebras. Finally, we define a notion of  dual-Hom-pre-Poisson algebra and  Hom-average-operator on a Hom-Poisson algebra. We show that a Hom-average-operator on a Hom-Poisson algebra lead to  a dual-Hom-pre-Poisson algebra.

The paper is organized as follows. In Section \ref{sec:pre}, we recall relevant definitions and representation about Hom-algebras. In Section \ref{sec:HPP}, we consider  the notion of a Hom-pre-Poisson algebra and study  its relationship with Hom-Poisson algebras. Moreover, we introduce the notion of Hom-pre-Gerstenhaber algebras, and study its relationship with Hom-Gerstenhaber algebras. In Section \ref{sec:def}, we consider Hom-dendriform formal deformations of Hom-zinbiel algebras and show that Hom-pre-Poisson algebras are the corresponding semi-classical limits. In Section \ref{sec:O}, we discuss the notion of Hom-$\huaO$-operators on Hom-Poisson algebras and corresponding  Hom-pre-Poisson algebra. Conversely, we show that a  Hom-pre-Poisson algebra naturally gives a Hom-$\huaO$-operator on the sub-adjacent Hom-Poisson algebra. In Section \ref{sec:dual}, we deal with dual-Hom-pre-Poisson algebras and  Hom-average-operators on a Hom-Poisson algebras. We show that a Hom-average-operator on a Hom-Poisson algebra gives rise to  a dual-Hom-pre-Poisson algebra.

\vspace{2mm}
\noindent
{\bf Acknowledgements. }  We give warmest thanks to Yunhe Sheng   for helpful comments that improve the paper.   This work is supported by
 National Natural Science Foundation of China (Grant No. 12001226).

\section{Preliminaries}\label{sec:pre}

\begin{defi}{\rm(\cite{MS2})}
A {\bf Hom-associative algebra} is a triple $(A,\cdot,\alpha)$ consisting of a vector space $A$, a bilinear map $\cdot:A\otimes A\longrightarrow A$ and an algebra morphism $\alpha:A\longrightarrow A$, satisfying:
  \begin{equation}
\alpha(x)\cdot(y\cdot z)=(x\cdot y)\cdot \alpha(z),\quad
\forall~x,y,z\in A.
\end{equation}
A Hom-associative algebra for which $x\cdot y=y\cdot x$ is called a {\bf commutative Hom-associative algebra}. A commutative Hom-associative algebra $(A,\cdot,\alpha)$ is said to be {\bf regular} if $\alpha$ is invertible.
 \end{defi}

\begin{defi}
 A {\bf representation} of a commutative Hom-associative algebra $(A,\cdot,\alpha)$ on
 a vector space $V$ with respect to $\beta\in\gl(V)$ is a linear map
  $\mu:A\longrightarrow \gl(V)$, such that for all
  $x,y\in A$, the following equalities are satisfied:
\begin{eqnarray}
\label{hom-comm-rep-1}\mu(\alpha(x))\circ \beta&=&\beta\circ \mu(x),\\
\label{hom-comm-rep-2}\mu(x\cdot y)\circ \beta&=&\mu(\alpha(x))\circ\mu(y).
\end{eqnarray}
  \end{defi}
We denote a representation by $(V,\beta,\mu)$. Moreover, define a linear map $L:A\longrightarrow \gl(A)$ by $L_x (y)=x\cdot y$. Then $(A,L,\alpha)$ is a representation of the commutative Hom-associative algebra $(A,\cdot,\alpha)$, which is called the {\bf regular representation}.

Let $(V,\beta,\mu)$ be a representation of a commutative Hom-associative algebra $(A,\cdot,\alpha)$. In the sequel, we always assume that $\beta$ is invertible. For all $x\in A,u\in V,\xi\in V^*$, define $\mu^*:A\longrightarrow\gl(V^*)$ as usual by
$$\langle\mu^*(x)(\xi),u\rangle=-\langle\xi,\mu(x)(u)\rangle.$$

Then define $\mu^\star:A\longrightarrow\gl(V^*)$ by
\begin{equation}\label{eq:1.3}
 \mu^\star(x)(\xi):=\mu^*(\alpha(x))\big{(}(\beta^{-2})^*(\xi)\big{)}.
\end{equation}

\begin{thm}\label{dual-rep-1}
Let $(V,\beta,\mu)$ be a representation of a commutative Hom-associative algebra $(A,\cdot,\alpha)$, where $\beta$ is invertible. Then $(V^*,(\beta^{-1})^*,-\mu^\star)$ is a representation of $(A,\cdot,\alpha)$, which is called the {\bf dual representation}.
\end{thm}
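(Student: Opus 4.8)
The plan is to check directly that $(V^*,(\beta^{-1})^*,-\mu^\star)$ satisfies the two representation axioms \eqref{hom-comm-rep-1} and \eqref{hom-comm-rep-2}, evaluating each proposed identity on an arbitrary $\xi\in V^*$ and pairing against an arbitrary $u\in V$; since $\beta$ is invertible I may move powers of $\beta$ and $\beta^{-1}$ across the pairing at will. The first step is to unwind \eqref{eq:1.3}: combining it with the defining relation of $\mu^*$ gives, for all $x\in A$, $\xi\in V^*$, $u\in V$,
$$\langle\mu^\star(x)(\xi),u\rangle=-\langle\xi,\beta^{-2}\mu(\alpha(x))(u)\rangle.$$
Rewriting \eqref{hom-comm-rep-1} as $\mu(\alpha(x))=\beta\circ\mu(x)\circ\beta^{-1}$ then collapses the right-hand side to $-\langle\xi,\beta^{-1}\mu(x)\beta^{-1}(u)\rangle$. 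This normalized formula is the workhorse for both verifications.

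For the first axiom I would compute $\langle(-\mu^\star)(\alpha(x))\big((\beta^{-1})^*(\xi)\big),u\rangle$ and $\langle(\beta^{-1})^*\big((-\mu^\star)(x)(\xi)\big),u\rangle$ using the normalized formula; both reduce, after one more application of $\mu(\alpha(x))=\beta\circ\mu(x)\circ\beta^{-1}$ and cancellation of powers of $\beta$, to $\langle\xi,\beta^{-1}\mu(x)\beta^{-2}(u)\rangle$, so the identity holds. This step uses only \eqref{hom-comm-rep-1} and the invertibility of $\beta$.

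The second axiom is where the real work lies. Expanding the two sides of $(-\mu^\star)(x\cdot y)\circ(\beta^{-1})^*=(-\mu^\star)(\alpha(x))\circ(-\mu^\star)(y)$ with the normalized formula and with \eqref{hom-comm-rep-2} in the form $\mu(x\cdot y)=\mu(\alpha(x))\circ\mu(y)\circ\beta^{-1}$, I expect the left side to become $\langle\xi,\beta^{-1}\mu(x)\beta^{-1}\mu(y)\beta^{-2}(u)\rangle$ and the right side to become $\langle\xi,\beta^{-1}\mu(y)\beta^{-1}\mu(x)\beta^{-2}(u)\rangle$. These agree precisely when $\mu(x)\beta^{-1}\mu(y)=\mu(y)\beta^{-1}\mu(x)$, which is not a consequence of the representation axioms alone; this is the main obstacle. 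I would resolve it using the commutativity of $A$: applying \eqref{hom-comm-rep-2} to both $x\cdot y$ and $y\cdot x$ and using $x\cdot y=y\cdot x$ yields $\mu(\alpha(x))\circ\mu(y)=\mu(\alpha(y))\circ\mu(x)$, and substituting $\mu(\alpha(x))=\beta\circ\mu(x)\circ\beta^{-1}$ and $\mu(\alpha(y))=\beta\circ\mu(y)\circ\beta^{-1}$ and cancelling the leading $\beta$ gives exactly $\mu(x)\beta^{-1}\mu(y)=\mu(y)\beta^{-1}\mu(x)$, completing the verification.
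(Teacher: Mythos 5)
Your proof is correct and follows essentially the same route as the paper's: a direct verification of both representation axioms by pairing against $V$, shuttling powers of $\beta$ across the pairing via \eqref{hom-comm-rep-1}, and invoking the commutativity of $A$ together with \eqref{hom-comm-rep-2} at exactly the point where the two sides of the product axiom differ by an order swap (the paper swaps $\alpha(x)\cdot\alpha(y)=\alpha(y)\cdot\alpha(x)$ inline, which is the same device as your isolated lemma $\mu(x)\circ\beta^{-1}\circ\mu(y)=\mu(y)\circ\beta^{-1}\circ\mu(x)$). Your normalization $\langle\mu^\star(x)(\xi),u\rangle=-\langle\xi,\beta^{-1}\mu(x)\beta^{-1}(u)\rangle$ merely packages the paper's computation more cleanly; the substance is identical.
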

\begin{proof}
For all $x\in A,\xi\in V^*$ and $u\in V$, by \eqref{hom-comm-rep-1}, we have
\begin{eqnarray*}
-\mu^\star(\alpha(x))\big((\beta^{-1})^*(\xi)\big)&=&-\mu^*(\alpha^{2}(x))\big((\beta^{-3})^*(\xi)\big)\\
&=&(\beta^{-1})^*\big(-\mu^*(\alpha(x))\big)\big((\beta^{-2})^*(\xi)\big)\\
&=&(\beta^{-1})^*\big(-\mu^\star(x)(\xi)\big),
\end{eqnarray*}
which implies that
\begin{equation}\label{hom-dual-rep-1}
-\mu^\star{(}\alpha(x){)}\circ(\beta^{-1})^*=(\beta^{-1})^*\circ(-\mu^\star(x)).
\end{equation}

For all $x,y\in A,\xi\in V^*$ and $u\in V$, by \eqref{hom-comm-rep-2}, we have
\begin{eqnarray*}
\Big\langle-\mu^\star(x\cdot y)\big((\beta^{-1})^*(\xi)\big),u\Big\rangle
&=&\Big\langle-\mu^*\big(\alpha(x)\cdot \alpha(y)\big)\big((\beta^{-3})^*(\xi)\big),u\Big\rangle\\
&=&\Big\langle-\mu^*\big(\alpha(y)\cdot \alpha(x)\big)\big((\beta^{-3})^*(\xi)\big),u\Big\rangle\\
&=&\Big\langle(\beta^{-3})^*(\xi),\mu\big(\alpha(y)\cdot \alpha(x)\big)(u)\Big\rangle\\
&=&\Big\langle(\beta^{-3})^*(\xi),\mu(\alpha^{2}(y))\big(\mu(\alpha (x))(\beta^{-1}(u))\big)\Big\rangle\\
&=&\Big\langle(\beta^{-4})^*(\xi),\mu(\alpha^{3}(y))\big(\mu(\alpha^{2}(x))(u)\big)\Big\rangle\\
&=&\Big\langle\mu^*(\alpha^{2}(x))\big(\mu^*(\alpha^{3}(y))((\beta^{-4})^*(\xi))\big),u\Big\rangle\\
&=&\Big\langle\mu^*(\alpha^{2}(x))\big(\mu^\star(\alpha^{2}(y))((\beta^{-2})^*(\xi))\big),u\Big\rangle\\
&=&\Big\langle\mu^*(\alpha^{2}(x))\big((\beta^{-2})^*(\mu^\star(y)(\xi))\big),u\Big\rangle\\
&=&\Big\langle\mu^\star(\alpha (x))\big{(}\mu^\star(y)(\xi)\big{)},u\Big\rangle,
\end{eqnarray*}
which implies that
\begin{equation}\label{hom-dual-rep-2}
-\mu^\star(x\cdot y)\circ(\beta^{-1})^*=\mu^\star(\alpha(x))\circ\mu^\star(y).
\end{equation}
By \eqref{hom-dual-rep-1} and \eqref{hom-dual-rep-2}, we deduce that $(V^*,(\beta^{-1})^*,-\mu^\star)$ is a representation of $(A,\cdot,\alpha)$.
\end{proof}

\begin{defi}{\rm(\cite{HLS})}
A {\bf Hom-Lie algebra} is a triple $(A,[\cdot,\cdot],\alpha)$ consisting of a linear space $A$, a skew-symmetric bilinear map $[\cdot,\cdot]:\wedge^2A\longrightarrow A$ and an algebra morphism $\alpha:A\longrightarrow A $, satisfying:
\begin{equation}
[\alpha(x),[y,z]]+[\alpha(y),[z,x]]+[\alpha(z),[x,y]]=0,\quad
\forall~x,y,z\in A.
\end{equation}
 A Hom-Lie algebra $(A,[\cdot,\cdot],\alpha)$ is said to be {\bf regular} if $\alpha$ is invertible.
  \end{defi}
\begin{defi}{\rm(\cite{BM,sheng3})}\label{defi:hom-lie representation}
 A {\bf representation} of a Hom-Lie algebra $(A,[\cdot,\cdot],\alpha)$ on
 a vector space $V$ with respect to $\beta\in\gl(V)$ is a linear map
  $\rho:A\longrightarrow \gl(V)$, such that for all
  $x,y\in A$, the following equalities are satisfied:
\begin{eqnarray}
\label{hom-lie-rep-1}\rho(\alpha(x))\circ \beta&=&\beta\circ \rho(x),\\
\label{hom-lie-rep-2}\rho([x,y])\circ \beta&=&\rho(\alpha(x))\circ\rho(y)-\rho(\alpha(y))\circ\rho(x).
\end{eqnarray}
  \end{defi}
We denote a representation by $(V,\beta,\rho)$.  Moreover, define a linear map $\ad:A\longrightarrow \gl(A)$ by $\ad_x (y)=[x,y]$. Then $(A,\alpha,\ad)$ is a representation of the Hom-Lie algebra $(A,[\cdot,\cdot],\alpha)$, which is called the {\bf adjoint representation}

Let $(V,\beta,\rho)$ be a representation of a Hom-Lie algebra $(A,[\cdot,\cdot],\alpha)$. In the sequel, we always assume that $\beta$ is invertible. For all $x\in A,u\in V,\xi\in V^*$, define $\rho^*:A\longrightarrow\gl(V^*)$ as usual by
$$\langle\rho^*(x)(\xi),u\rangle=-\langle\xi,\rho(x)(u)\rangle$$
Then define $\rho^\star:A\longrightarrow\gl(V^*)$ by
\begin{equation}\label{eq:1.4}
 \rho^\star(x)(\xi):=\rho^*(\alpha(x))\big{(}(\beta^{-2})^*(\xi)\big{)}.
\end{equation}
\begin{thm}{\rm(\cite{Cai-Sheng})}\label{dual-rep-2}
Let $(V,\beta,\rho)$ be a representation of a Hom-Lie algebra $(A,[\cdot,\cdot],\alpha)$, where $\beta$ is invertible. Then $(V^*,(\beta^{-1})^*,\rho^\star)$ is a representation of $(A,[\cdot,\cdot],\alpha)$, which is called the {\bf dual representation}.
\end{thm}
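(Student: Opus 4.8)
The plan is to verify directly that the triple $(V^*,(\beta^{-1})^*,\rho^\star)$ satisfies the two defining conditions \eqref{hom-lie-rep-1} and \eqref{hom-lie-rep-2} of a Hom-Lie representation, now with $\rho$ replaced by $\rho^\star$ and $\beta$ replaced by $(\beta^{-1})^*$. The whole argument runs parallel to the proof of Theorem \ref{dual-rep-1}: one unfolds the definition \eqref{eq:1.4} of $\rho^\star$, transfers everything onto the $V$-side through the defining relation $\langle\rho^*(x)(\xi),u\rangle=-\langle\xi,\rho(x)(u)\rangle$, and then repeatedly applies the representation axioms to reshuffle the powers of $\alpha$ and $\beta$. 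Note that, in contrast to Theorem \ref{dual-rep-1} where the module dualizes with an extra global sign $-\mu^\star$, here the dual representation is $\rho^\star$ itself, consistent with the coadjoint-type construction.

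Before starting I would record the dualized form of \eqref{hom-lie-rep-1}. Taking transposes in $\rho(\alpha(x))\circ\beta=\beta\circ\rho(x)$ and using $\rho^*(x)=-\rho(x)^*$ (the two signs cancel) yields $\beta^*\circ\rho^*(\alpha(x))=\rho^*(x)\circ\beta^*$, equivalently $(\beta^{-1})^*\circ\rho^*(x)=\rho^*(\alpha(x))\circ(\beta^{-1})^*$. This identity is the sole tool needed to commute a factor of $(\beta^{-1})^*$ past a $\rho^*$ at the cost of raising the $\alpha$-degree by one, and it is used in both steps.

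For the first condition I would expand $\rho^\star(\alpha(x))\circ(\beta^{-1})^*$ and $(\beta^{-1})^*\circ\rho^\star(x)$ using \eqref{eq:1.4}; both reduce to $\rho^*(\alpha^2(x))\circ(\beta^{-3})^*$, the second after one application of the commutation relation recorded above, giving
\begin{equation*}
\rho^\star(\alpha(x))\circ(\beta^{-1})^*=(\beta^{-1})^*\circ\rho^\star(x).
\end{equation*}
For the second, more substantial condition I would pair $\rho^\star([x,y])\big((\beta^{-1})^*(\xi)\big)$ against an arbitrary $u\in V$. Using that $\alpha$ is an algebra morphism, $\alpha([x,y])=[\alpha(x),\alpha(y)]$, the definition \eqref{eq:1.4} turns the left-hand side into $-\langle(\beta^{-3})^*(\xi),\rho([\alpha(x),\alpha(y)])(u)\rangle$. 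Now \eqref{hom-lie-rep-2}, applied with $x,y$ replaced by $\alpha(x),\alpha(y)$, splits $\rho([\alpha(x),\alpha(y)])\circ\beta$ into $\rho(\alpha^2(x))\circ\rho(\alpha(y))-\rho(\alpha^2(y))\circ\rho(\alpha(x))$, producing two terms of opposite sign. On each term I would migrate the surviving $\beta^{-1}$ from $u$ onto $\xi$ by means of \eqref{hom-lie-rep-1}, raising the $\alpha$-degrees accordingly, and then fold the result back into $\rho^\star$'s using \eqref{eq:1.4} and the first condition already established. Matching these against the expansion of $\rho^\star(\alpha(x))\circ\rho^\star(y)-\rho^\star(\alpha(y))\circ\rho^\star(x)$ yields \eqref{hom-lie-rep-2} for $\rho^\star$.

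The step I expect to be the main obstacle is the exponent bookkeeping in the second condition. Because $\rho^\star$ carries the built-in shift $\rho^\star(\cdot)=\rho^*(\alpha(\cdot))\circ(\beta^{-2})^*$, composing two copies of $\rho^\star$ on the right and one copy (together with the extra $(\beta^{-1})^*$) on the left leads to different raw powers of $\alpha$ and $(\beta^{-1})^*$, and these agree only after the correct number of applications of \eqref{hom-lie-rep-1}. The extra subtlety compared with Theorem \ref{dual-rep-1} is that the skew-symmetric identity \eqref{hom-lie-rep-2} yields two terms rather than one, so I must check that the index swap $x\leftrightarrow y$ in the second term is compatible with the twisting maps in \eqref{eq:1.4}, so that both signs and all occurrences of $\alpha$ and $\beta$ line up simultaneously.
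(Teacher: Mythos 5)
Your proposal is correct: the bookkeeping works out exactly as you describe (both sides of the first condition reduce to $\rho^*(\alpha^{2}(x))\circ(\beta^{-3})^*$, and both sides of the second reduce to $\rho^*(\alpha^{2}(x))\circ\rho^*(\alpha^{3}(y))\circ(\beta^{-4})^*-\rho^*(\alpha^{2}(y))\circ\rho^*(\alpha^{3}(x))\circ(\beta^{-4})^*$, so the index swap causes no sign or twisting mismatch). The paper itself cites this theorem from \cite{Cai-Sheng} without reproducing a proof, but your argument is precisely the method the paper uses for the commutative Hom-associative analogue (Theorem \ref{dual-rep-1}) and for the Hom-Poisson case (Proposition \ref{dual-representation}), i.e.\ essentially the same approach.
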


\begin{defi}{\rm(\cite{MS2})}
A {\bf Hom-pre-Lie algebra} is a triple $(A,\ast,\alpha)$ consisting of a vector space $A$, a bilinear map $\ast:A\otimes A\longrightarrow A$ and an algebra morphism $\alpha:A\longrightarrow A $, satisfying:
\begin{eqnarray}
(x\ast y)\ast \alpha(z)-\alpha(x)\ast (y\ast z)=(y\ast x)\ast \alpha(z)-\alpha(y)\ast (x\ast z),\quad
\forall~x,y,z\in A.
\end{eqnarray}
\end{defi}
Let $(A,\ast,\alpha)$ be a Hom-pre-Lie algebra. The commutator $[x,y]=x\ast y-y\ast x$ gives a Hom-Lie algebra $(A,[\cdot,\cdot],\alpha)$, which is denoted by $A^C$ and called the sub-adjacent Hom-Lie algebra of $(A,\ast,\alpha)$. Moreover, define a linear map $\huaL:A\longrightarrow \gl(A)$ by $\huaL_x(y)=x\ast y$. Then $(A,\huaL,\alpha)$ is a representation of the sub-adjacent Hom-Lie algebra $A^C$

\begin{defi}{\rm(\cite{AEM})}
A {\bf Hom-zinbiel algebra} is a triple $(A,\diamond,\alpha)$ consisting of a vector space $A$, a bilinear map $\diamond:A\otimes A\longrightarrow A$ and an algebra morphism $\alpha:A\longrightarrow A $, satisfying:
\begin{equation}
\alpha(x)\diamond(y\diamond z)=(x\diamond y)\diamond\alpha(z)+(y\diamond x)\diamond\alpha(z),\quad
\forall~x,y,z\in A.
\end{equation}
\end{defi}
Defining $\cdot:A\otimes A\longrightarrow A$ by $x\cdot y=x\diamond y+y\diamond x$, we obtain a commutative Hom-associative algebra $(A,\cdot,\alpha)$. Moreover, define a linear map $\mathfrak{L}:A\longrightarrow \gl(A)$ by $\mathfrak{L}_x(y)=x \diamond y$. Then $(A,\mathfrak{L},\alpha)$ is a representation of the commutative Hom-associative algebra $(A,\cdot,\alpha)$.

\begin{defi}{\rm(\cite{MakDend})}
A {\bf Hom-dendriform algebra} is a quadruple $(A,\prec,\succ,\alpha)$ consisting of a vector space $A$, bilinear maps $\prec,\succ:A\otimes A\longrightarrow A$ and an algebra morphism $\alpha:A\longrightarrow A $, such that for all $x,y,z\in A$, satisfying:
\begin{eqnarray}
(x\prec y)\prec\alpha(z)&=&\alpha(x)\prec(y\prec z+y\succ z),\\
(x\succ y)\prec \alpha(z)&=&\alpha(x)\succ(y\prec z),\\
\alpha(x)\succ(y\succ z)&=&(x\prec y+x\succ y)\succ\alpha(z).
\end{eqnarray}
\end{defi}
A Hom-dendriform algebra for which $x\succ y=y\prec x=x \diamond y$ is exactly a Hom-zinbiel algebra.

Defining $\cdot:A\otimes A\longrightarrow A$ by $x\cdot y=x\succ y+x\prec y$, we obtain a Hom-associative algebra structure on $A$. Defining $\ast:A\otimes A\longrightarrow A$ by $x\ast y=x\succ y-y\prec x$, we obtain a Hom-pre algebra structure on $A$.

The above algebras can be summarized by the following diagram:
 $$
\xymatrix{
 \text{Hom-zinbiel-algebra}\ar[d]\ar[r]& \text{Hom-dendriform-algebra} \ar[d] \ar[r] & \text{Hom-pre-Lie-algebra} \ar[d] \\
  \text{Hom-commutative-algebra} \ar[r] &\text{Hom-associative-algebra} \ar[r] & \text{Hom-Lie-algebra}  }
$$

\section{Hom-pre-Poisson algebras and Hom-pre-Gerstenhaber algebras}\label{sec:HPP}

In this section, first we study representations of a Hom-Poisson algebra. Then we introduce the notion of a Hom-pre-Poisson algebra and give the relation between Hom-Poisson algebras and Hom-pre-Poisson algebras. Finally we introduce the notion of Hom-pre-Gerstenhaber algebras, and study its relation with Hom-Gerstenhaber algebras.
\subsection{Hom-pre-Poisson algebras}
\begin{defi}{\rm(\cite{MS1})}
A {\bf Hom-Poisson algebra} is a quadruple $(A,\cdot,[\cdot,\cdot],\alpha)$, where $(A,\cdot,\alpha)$ is a commutative Hom-associative algebra and $(A,[\cdot,\cdot],\alpha)$ is a Hom-Lie algebra, satisfying:
\begin{equation}\label{hom-poisson}
[\alpha(x),y\cdot z]=[x,y]\cdot \alpha(z)+\alpha(y)\cdot [x,z],\quad
\forall~x,y,z\in A.
\end{equation}
\end{defi}
\begin{ex}
Let $(A,\cdot_A,[\cdot,\cdot]_A,\alpha_A)$ and $(B,\cdot_B,[\cdot,\cdot]_B,\alpha_B)$ be two Hom-Poisson algebras. Then $(A\oplus B,\cdot_{A\oplus B},[\cdot,\cdot]_{A\oplus B},\alpha_A\oplus \alpha_B)$ is a Hom-Poisson algebra, where the bracket $[\cdot,\cdot]_{A\oplus B}$, the product $\cdot_{A\oplus B}$ and $\alpha_A\oplus \alpha_B$ are given by
\begin{eqnarray*}
[x+u,y+v]_{A\oplus B}&=&[x,y]_A+[u,v]_B,\\
(x+u)\cdot_{A\oplus B}(y+v)&=&x\cdot_A y+u\cdot_B v,\\
(\alpha_A\oplus \alpha_B)(x+u)&=&\alpha_A(x)+\alpha_B(u),
\end{eqnarray*}
for all $x,y\in A, u,v\in B.$
\end{ex}

\begin{defi}\label{defi:hom-pre-poisson representation}
 A {\bf representation} of a Hom-Poisson algebra $(A,\cdot,[\cdot,\cdot],\alpha)$ on a vector space $V$ with respect to $\beta\in\gl(V)$ consists of a pair $(\rho,\mu)$, where $(V,\beta,\rho)$ is a representation of the Hom-Lie algebra $(A,[\cdot,\cdot],\alpha)$ and $(V,\beta,\mu)$ is a representation of the commutative Hom-associative algebra $(A,\cdot,\alpha)$, such that for all $x,y\in A$, satisfying:
\begin{eqnarray}
\label{rep-1}\rho(x\cdot y)\circ \beta&=&\mu(\alpha(y))\circ \rho(x)+\mu(\alpha(x))\circ \rho(y),\\
\label{rep-2}\rho(\alpha(x))\circ \mu(y)&=&\mu(\alpha(y))\circ\rho(x)+\mu([x,y])\circ \beta.
\end{eqnarray}
\end{defi}
We denote a representation of a Hom-Poisson algebra $(A,\cdot,[\cdot,\cdot],\alpha)$ by $(V,\beta,\rho,\mu)$. Moreover, $(A,\alpha,\ad,L)$ is a representation of a Hom-Poisson algebra $(A,\cdot,[\cdot,\cdot],\alpha)$, which is called the {\bf regular representation}.
\begin{pro}
Let $(A,\cdot,[\cdot,\cdot],\alpha)$ be a Hom-Poisson algebra and $(V,\beta,\rho,\mu)$ its representation. Then $(A\oplus V,\cdot_{\mu},[\cdot,\cdot]_{\rho},\alpha\oplus \beta)$ is a Hom-Poisson algebra, where $(A\oplus V,\cdot_{\mu},\alpha\oplus \beta)$ is the semi-direct commutative Hom-associative algebra,
\begin{equation}
\nonumber(x+u)\cdot_{\mu}(y+v)=x\cdot y+\mu(x)v+\mu(y)u,\quad
\forall~x,y\in A, u,v\in V,
\end{equation}
and $(A\oplus V,[\cdot,\cdot]_{\rho},\alpha\oplus \beta)$ is the semi-direct Hom-Lie algebra,
\begin{equation}
\nonumber[x+u,y+v]_{\rho}=[x,y]+\rho(x)v-\rho(y)u,\quad
\forall~x,y\in A, u,v\in V.
\end{equation}
\end{pro}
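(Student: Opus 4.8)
The plan is to verify the three defining properties of a Hom-Poisson algebra for the quadruple $(A\oplus V,\cdot_{\mu},[\cdot,\cdot]_{\rho},\alpha\oplus\beta)$. Two of these require no new work: that $(A\oplus V,\cdot_{\mu},\alpha\oplus\beta)$ is a commutative Hom-associative algebra is the standard semi-direct product construction, which holds precisely because $(V,\beta,\mu)$ is a representation of $(A,\cdot,\alpha)$ (it uses only \eqref{hom-comm-rep-1} and \eqref{hom-comm-rep-2}); likewise $(A\oplus V,[\cdot,\cdot]_{\rho},\alpha\oplus\beta)$ is the semi-direct Hom-Lie algebra attached to the representation $(V,\beta,\rho)$ of $(A,[\cdot,\cdot],\alpha)$, relying only on \eqref{hom-lie-rep-1} and \eqref{hom-lie-rep-2}. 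Hence the genuinely new content is the compatibility identity \eqref{hom-poisson} for the new product and bracket, and this is where the two extra axioms \eqref{rep-1} and \eqref{rep-2} of a Hom-Poisson representation must enter.

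To check \eqref{hom-poisson} I would take general elements $x+u,\,y+v,\,z+w$ with $x,y,z\in A$ and $u,v,w\in V$, and expand both sides of
\[
[(\alpha\oplus\beta)(x+u),\,(y+v)\cdot_{\mu}(z+w)]_{\rho}=[x+u,y+v]_{\rho}\cdot_{\mu}(\alpha\oplus\beta)(z+w)+(\alpha\oplus\beta)(y+v)\cdot_{\mu}[x+u,z+w]_{\rho}
\]
using only the definitions of $\cdot_{\mu}$ and $[\cdot,\cdot]_{\rho}$. Each side then splits into an $A$-component and a $V$-component. The $A$-component of the identity reduces verbatim to the Hom-Poisson compatibility \eqref{hom-poisson} for $A$ itself, so it holds by hypothesis; the whole problem therefore collapses onto the $V$-component.

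For the $V$-component I would collect the terms according to the three vector arguments $u$, $v$, $w$, since the three are independent. The coefficient of $u$ yields the requirement $\rho(y\cdot z)\circ\beta=\mu(\alpha(z))\circ\rho(y)+\mu(\alpha(y))\circ\rho(z)$, which is exactly \eqref{rep-1} after the relabelling $(x,y)\mapsto(y,z)$. The coefficient of $v$ yields $\rho(\alpha(x))\circ\mu(z)=\mu(\alpha(z))\circ\rho(x)+\mu([x,z])\circ\beta$, and the coefficient of $w$ yields $\rho(\alpha(x))\circ\mu(y)=\mu(\alpha(y))\circ\rho(x)+\mu([x,y])\circ\beta$; both are instances of \eqref{rep-2}. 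Thus all three pieces are supplied precisely by the representation axioms, and the identity follows.

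I do not expect a conceptual obstacle here: the proof is a direct expansion. The one place demanding care is bookkeeping the twists by $\alpha$ and $\beta$, making sure that each occurrence of $(\alpha\oplus\beta)$ distributes as $\alpha$ on the $A$-slot and $\beta$ on the $V$-slot, so that the resulting operator identities match \eqref{rep-1} and \eqref{rep-2} on the nose rather than up to a stray application of $\alpha$ or $\beta$. Organizing the $V$-component by the arguments $u,v,w$ is the device that keeps this bookkeeping manageable and makes the correspondence with the two axioms transparent.
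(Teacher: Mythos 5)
Your proposal is correct and follows essentially the same route as the paper: the semi-direct product algebra structures are taken as standard, and the only real work is reducing the compatibility identity \eqref{hom-poisson} on $A\oplus V$ to the representation axioms \eqref{rep-1} and \eqref{rep-2} via bilinearity. The paper organizes this by checking the identity on homogeneous triples (the $V$-element placed in the third slot, then the first, with the second following by commutativity), which is exactly your coefficient-of-$u$, $v$, $w$ bookkeeping in a slightly different dress.
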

\begin{proof}
For all $x,y,z\in A$, by \eqref{rep-2}, we have
\begin{eqnarray*}
[\alpha(x),y\cdot_{\mu} u]_{\rho}-[x,y]\cdot_{\mu}\beta(u)-\alpha(y)\cdot_{\mu}[x,u]_{\rho}
&=&[\alpha(x),\mu(y)u]_{\rho}-\mu([x,y])\beta(u)-\alpha(y)\cdot_{\mu}\rho(x)u\\
&=&\rho(\alpha(x))\mu(y)u-\mu([x,y])\beta(u)-\mu(\alpha(y))\rho(x)u\\
&=&0.
\end{eqnarray*}
Similarly, we have
\begin{equation}
\nonumber[\beta(u),x\cdot y]_{\rho}=[u,x]_{\rho}\cdot_{\mu}\alpha(y)+\alpha(x)\cdot_{\mu}[u,y]_{\rho}.
\end{equation}
This finishes the proof.
\end{proof}

\begin{pro}\label{dual-representation}
Let $(V,\beta,\rho,\mu)$ be a representation of a Hom-Poisson algebra $(A,\cdot,[\cdot,\cdot],\alpha)$, where $\beta$ is invertible. Then $(V^*,(\beta^{-1})^*,\rho^\star,-\mu^\star)$ is a representation of the Hom-Poisson algebra $(A,\cdot,[\cdot,\cdot],\alpha)$, which is called the {\bf dual representation}.
\end{pro}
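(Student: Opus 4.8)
The plan is to check, one ingredient at a time, the three requirements of Definition~\ref{defi:hom-pre-poisson representation} for the candidate quadruple $(V^*,(\beta^{-1})^*,\rho^\star,-\mu^\star)$. Two of the three come for free from results already established: that $(V^*,(\beta^{-1})^*,\rho^\star)$ is a representation of the Hom-Lie algebra $(A,[\cdot,\cdot],\alpha)$ is exactly Theorem~\ref{dual-rep-2}, and that $(V^*,(\beta^{-1})^*,-\mu^\star)$ is a representation of the commutative Hom-associative algebra $(A,\cdot,\alpha)$ is exactly Theorem~\ref{dual-rep-1}. Hence the whole content of the proposition reduces to the two mixed compatibility conditions \eqref{rep-1} and \eqref{rep-2}, now written with $\rho$ replaced by $\rho^\star$, $\mu$ by $-\mu^\star$, and $\beta$ by $(\beta^{-1})^*$; explicitly, I must establish $\rho^\star(x\cdot y)\circ(\beta^{-1})^*=-\mu^\star(\alpha(y))\circ\rho^\star(x)-\mu^\star(\alpha(x))\circ\rho^\star(y)$ and $\rho^\star(\alpha(x))\circ\mu^\star(y)=\mu^\star(\alpha(y))\circ\rho^\star(x)+\mu^\star([x,y])\circ(\beta^{-1})^*$.

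Both identities I would verify by evaluating on an arbitrary $\xi\in V^*$ and pairing against an arbitrary $u\in V$, which reduces everything to the ordinary duality pairing. The mechanism mirrors the proof of Theorem~\ref{dual-rep-1}: I expand $\rho^\star$ and $\mu^\star$ via their definitions \eqref{eq:1.3} and \eqref{eq:1.4}, turning each side into an expression in $\rho^*,\mu^*$ and powers of $(\beta^{-1})^*$ and $\alpha$; I then move the starred operators across the pairing using $\langle\mu^*(x)\xi,u\rangle=-\langle\xi,\mu(x)u\rangle$ and $\langle\rho^*(x)\xi,u\rangle=-\langle\xi,\rho(x)u\rangle$, so that only $\rho,\mu$ acting on $u$ remain; finally I reconcile the $\beta$-powers with the twisting relations \eqref{hom-comm-rep-1} and \eqref{hom-lie-rep-1}, used in the iterated form $\mu(\alpha^k(x))=\beta^k\circ\mu(x)\circ\beta^{-k}$ and $\rho(\alpha^k(x))=\beta^k\circ\rho(x)\circ\beta^{-k}$.

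The decisive step is the appeal to the original Hom-Poisson axioms. For the first identity, expanding the left-hand side and using that $\alpha$ is an algebra morphism, $\alpha(x\cdot y)=\alpha(x)\cdot\alpha(y)$, brings me to a term $-\langle(\beta^{-3})^*\xi,\rho(\alpha(x)\cdot\alpha(y))u\rangle$; applying \eqref{rep-1} to the pair $\alpha(x),\alpha(y)$ splits this into one $\mu$-$\rho$ summand for $y$ and one for $x$, which I then match term by term against the corresponding expansion of the right-hand side. For the second identity the same scheme applies, with \eqref{rep-2} invoked at the crucial step; the only additional bookkeeping is that the bracket term must be carried through the morphism property $\alpha([x,y])=[\alpha(x),\alpha(y)]$.

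I expect the main obstacle to be purely the tracking of the $\alpha$- and $\beta$-exponents rather than any conceptual point. Each twisted dual in \eqref{eq:1.3}--\eqref{eq:1.4} inserts one $\alpha$ and two copies of $\beta^{-1}$, while the compatibility relations \eqref{rep-1}--\eqref{rep-2} carry their own $\alpha$'s and a single right-hand $\beta$; the exponents must therefore be followed with care so that, after the twisting relations are applied, the powers of $\beta$ on the two sides coincide exactly. Once they are aligned, both equalities are forced by the original representation axioms, and combining them with Theorems~\ref{dual-rep-1} and \ref{dual-rep-2} completes the proof.
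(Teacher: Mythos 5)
Your reduction is the same as the paper's: Theorems \ref{dual-rep-1} and \ref{dual-rep-2} dispose of the Hom-associative and Hom-Lie constituents, and what remains are the two mixed conditions of Definition \ref{defi:hom-pre-poisson representation} for $(V^*,(\beta^{-1})^*,\rho^\star,-\mu^\star)$, to be checked by pairing against $u\in V$. Your treatment of the second identity (the dual of \eqref{rep-2}, the paper's \eqref{rep-3}) is correct and is exactly the computation the paper writes out.

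The gap is in your plan for the first identity. You claim that after applying \eqref{rep-1} to $\rho(\alpha(x)\cdot\alpha(y))$ you can ``match term by term against the corresponding expansion of the right-hand side.'' That matching fails, because dualization reverses composition order: from $\langle\mu^*(a)\rho^*(b)\xi,u\rangle=\langle\xi,\rho(b)\mu(a)u\rangle$, the right-hand side $-\mu^\star(\alpha(y))\circ\rho^\star(x)-\mu^\star(\alpha(x))\circ\rho^\star(y)$ unwinds to terms of the form $\rho(\alpha^3(x))\mu(\alpha^2(y))u$ and $\rho(\alpha^3(y))\mu(\alpha^2(x))u$, with $\rho$ \emph{outermost}, whereas applying \eqref{rep-1} to the left-hand side produces $\mu(\alpha^3(y))\rho(\alpha^2(x))u$ and $\mu(\alpha^3(x))\rho(\alpha^2(y))u$, with $\mu$ outermost. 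These summands are not individually equal; to reconcile them you must commute $\rho$ past $\mu$ using \eqref{rep-2} twice,
\begin{equation*}
\rho(\alpha^3(x))\circ\mu(\alpha^2(y))=\mu(\alpha^3(y))\circ\rho(\alpha^2(x))+\mu\big(\alpha^2([x,y])\big)\circ\beta,
\end{equation*}
together with the same identity with $x$ and $y$ exchanged, and only the \emph{sum} of the two expansions agrees, after the two bracket terms cancel by skew-symmetry of $[\cdot,\cdot]$. So \eqref{rep-1} alone cannot close the first identity; this is precisely why the paper proves the dual of \eqref{rep-2} first and then obtains \eqref{rep-4} ``by \eqref{rep-1} and \eqref{rep-3}'' --- both axioms are needed. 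With this one extra step your argument is complete; the rest of your bookkeeping (the definitions \eqref{eq:1.3}--\eqref{eq:1.4}, and the twisting relations \eqref{hom-comm-rep-1} and \eqref{hom-lie-rep-1} used in iterated form) is right.
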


\begin{proof}
By Theorem \ref{dual-rep-1} and Theorem \ref{dual-rep-2}, we know that $(V^*,(\beta^{-1})^*,-\mu^\star)$ and $(V^*,(\beta^{-1})^*,\rho^\star)$ are the dual representation of the commutative Hom-associative algebra $(A,\cdot,\alpha)$ and the Hom-Lie algebra $(A,[\cdot,\cdot],\alpha)$ respectively. For all $x,y\in A, \xi\in A^*, u\in V$, by \eqref{rep-2}, we have
\begin{eqnarray*}
&&\Big\langle-\mu^\star[x, y]\big((\beta^{-1})^*(\xi)\big),u\Big\rangle\\
&=&\Big\langle-\mu^*\big[\alpha(x),\alpha(y)\big]\big((\beta^{-3})^*(\xi)\big),u\Big\rangle\\
&=&\Big\langle(\beta^{-3})^*(\xi),\mu\big[\alpha(x),\alpha(y)\big](u)\Big\rangle\\
&=&\Big\langle(\beta^{-3})^*(\xi),\rho(\alpha^{2}(x))\big(\mu(\alpha (y))(\beta^{-1}(u))\big)-\mu(\alpha^{2}(y))\big(\rho(\alpha(x))(\beta^{-1}(u))\big)\rangle\\
&=&\Big\langle(\beta^{-4})^*(\xi),\rho(\alpha^{3}(x))\big(\mu(\alpha^{2}(y))(u)\big)-\mu(\alpha^{3}(y))\big(\rho(\alpha^{2}(x))(u)\big)\rangle\\
&=&\Big\langle\mu^*(\alpha^{2}(y))\big(\rho^*(\alpha^{3}(x))((\beta^{-4})^*(\xi))\big)-\rho^*(\alpha^{2}(x))\big(\mu^*(\alpha^{3}(y))((\beta^{-4})^*(\xi))\big),u\rangle\\
&=&\Big\langle\mu^*(\alpha^{2}(y))\big(\rho^\star(\alpha^{2}(x))((\beta^{-2})^*(\xi))\big)-\rho^*(\alpha^{2}(x))\big(\mu^\star(\alpha^{2}(y))((\beta^{-2})^*(\xi))\big),u\rangle\\
&=&\Big\langle\mu^*(\alpha^{2}(y))\big((\beta^{-2})^*(\rho^\star(x)(\xi))\big)-\rho^*(\alpha^{2}(x))\big((\beta^{-2})^*(\mu^\star(y)(\xi))\big),u\rangle\\
&=&\Big\langle\mu^\star(\alpha (y))\big{(}\rho^\star(x)(\xi)\big{)}-\rho^\star(\alpha (x))\big{(}\mu^\star(y)(\xi)\big{)},u\Big\rangle,
\end{eqnarray*}
which implies that
\begin{equation}\label{rep-3}
-\rho^\star(\alpha(x))\circ\mu^\star(y)=-\mu^\star(\alpha(y))\circ\rho^\star(x)-\mu^\star[x,y]\circ (\beta^{-1})^*.
\end{equation}
Similarly, by \eqref{rep-1} and \eqref{rep-3}, we have
\begin{equation}\label{rep-4}
\rho^\star(x\cdot y)\circ (\beta^{-1})^*=-\mu^\star(\alpha(y))\circ\rho^\star(x)-\mu^\star(\alpha(x))\circ\rho^\star(y).
\end{equation}
Thus, by \eqref{rep-3} and \eqref{rep-4}, $(V^*,(\beta^{-1})^*,\rho^\star,-\mu^\star)$ is a representation of the Hom-Poisson algebra $(A,\cdot,[\cdot,\cdot],\alpha)$.
\end{proof}

\begin{defi}
A {\bf Hom-pre-Poisson algebra} is a quadruple $(A,\diamond,\ast,\alpha)$, where $(A,\diamond,\alpha)$ is a Hom-zinbiel algebra and $(A,\ast,\alpha)$ is a Hom-pre-Lie algebra, such that for all $x,y,z\in A$, the following conditions hold:
\begin{eqnarray}
\label{hom-pre-poisson-1}(x\ast y-y\ast x)\diamond \alpha(z)&=&\alpha(x)\ast(y\diamond z)-\alpha(y)\diamond(x\ast z),\\
\label{hom-pre-poisson-2}(x\diamond y+y\diamond x)\ast \alpha(z)&=&\alpha(x)\diamond(y\ast z)+\alpha(y)\diamond (x\ast z).
\end{eqnarray}
A Hom-pre-Poisson algebra $(A,\diamond,\ast,\alpha)$ is said to be {\bf regular} if $\alpha$ is invertible.
\end{defi}
\begin{rmk}
When $\alpha=\Id$, we recover the pre-Poisson algebra given in \cite{AGUIAR}.
\end{rmk}
\begin{defi}
Let $(A,\diamond,\ast,\alpha)$ and $(A',\diamond',\ast',\alpha')$ be two Hom-pre-Poisson algebras. A morphism between $A$ and $A'$ is a linear map $f:A\longrightarrow A'$, such that for all $x,y\in A$, the following conditions hold:
\begin{eqnarray}
f(x\diamond y)&=&f(x)\diamond' f(y),\\
f(x\ast y)&=&f(x)\ast' f(y),\\
f\circ \alpha&=&\alpha'\circ f.
\end{eqnarray}
\end{defi}
\begin{thm}\label{relation}
Let $(A,\diamond,\ast,\alpha)$ be a Hom-pre-Poisson algebra. Define
$$x\cdot y=x\diamond y+y\diamond x,\quad [x,y]=x\ast y-y\ast x, \quad \forall x,y \in A.$$
Then $(A,\cdot,[\cdot,\cdot],\alpha)$ is a Hom-Poisson algebra, which is denoted by $A^C$ and called the sub-adjacent Hom-Poisson algebra of $(A,\diamond,\ast,\alpha)$, $(A,\diamond,\ast,\alpha)$ is called the compatible Hom-pre-Poisson algebra of $A^C$. Moreover, $(A,\alpha,\huaL,\mathfrak{L})$ is a representation of $A^C$, where $\huaL_x(y)=x\ast y$ and $\mathfrak{L}_x(y)=x\diamond y$.
\end{thm}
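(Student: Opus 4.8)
The plan is to lean on the structural facts already recorded immediately after the definitions of Hom-zinbiel and Hom-pre-Lie algebras, so that most of the axioms come for free and only the Hom-Leibniz identity \eqref{hom-poisson} together with the two representation compatibility conditions \eqref{rep-1}, \eqref{rep-2} require genuine checking. Since $(A,\diamond,\alpha)$ is a Hom-zinbiel algebra, the product $x\cdot y=x\diamond y+y\diamond x$ already makes $(A,\cdot,\alpha)$ a commutative Hom-associative algebra and $(A,\mathfrak{L},\alpha)$ one of its representations; likewise, since $(A,\ast,\alpha)$ is a Hom-pre-Lie algebra, the commutator $[x,y]=x\ast y-y\ast x$ makes $(A,[\cdot,\cdot],\alpha)$ a Hom-Lie algebra and $(A,\huaL,\alpha)$ one of its representations. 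Thus to show $A^C$ is a Hom-Poisson algebra it remains only to verify \eqref{hom-poisson}.

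For \eqref{hom-poisson}, I would expand $[\alpha(x),y\cdot z]=\alpha(x)\ast(y\cdot z)-(y\cdot z)\ast\alpha(x)$ using $y\cdot z=y\diamond z+z\diamond y$. The term $\alpha(x)\ast(y\diamond z)$ is rewritten via \eqref{hom-pre-poisson-1}, which rearranges to $\alpha(x)\ast(y\diamond z)=[x,y]\diamond\alpha(z)+\alpha(y)\diamond(x\ast z)$, and symmetrically for $\alpha(x)\ast(z\diamond y)$; the term $(y\cdot z)\ast\alpha(x)$ is handled directly by \eqref{hom-pre-poisson-2}. After substitution the six $\diamond$-monomials regroup so that the differences $x\ast z-z\ast x=[x,z]$ and $x\ast y-y\ast x=[x,y]$ emerge, yielding $[x,y]\diamond\alpha(z)+\alpha(z)\diamond[x,y]+[x,z]\diamond\alpha(y)+\alpha(y)\diamond[x,z]$. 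Reassembling these into the commutative product gives $[x,y]\cdot\alpha(z)+[x,z]\cdot\alpha(y)$, and since $\cdot$ is commutative this equals $[x,y]\cdot\alpha(z)+\alpha(y)\cdot[x,z]$, which is exactly the right-hand side of \eqref{hom-poisson}.

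Finally, for the representation claim I would note that $(A,\mathfrak{L},\alpha)$ and $(A,\huaL,\alpha)$ are already representations of the commutative Hom-associative and the Hom-Lie parts respectively, so only the two cross-compatibility conditions of Definition \ref{defi:hom-pre-poisson representation} remain. Unravelling \eqref{rep-1} with $\rho=\huaL$, $\mu=\mathfrak{L}$, $\beta=\alpha$ and evaluating on an arbitrary $z$ gives precisely $(x\diamond y+y\diamond x)\ast\alpha(z)=\alpha(x)\diamond(y\ast z)+\alpha(y)\diamond(x\ast z)$, i.e. \eqref{hom-pre-poisson-2}; similarly \eqref{rep-2} unravels to $\alpha(x)\ast(y\diamond z)=\alpha(y)\diamond(x\ast z)+[x,y]\diamond\alpha(z)$, which is \eqref{hom-pre-poisson-1}. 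Hence both hold by hypothesis. The only real computation is the Hom-Leibniz identity, and the main care needed there is tracking the six $\diamond$-monomials and applying the correct compatibility condition to each term; no further use of Hom-associativity or the Hom-Jacobi identity beyond the quoted structural facts is required.
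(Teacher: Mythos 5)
Your proof is correct and follows essentially the same route as the paper: both rely on the sub-adjacent structural facts (Hom-zinbiel $\Rightarrow$ commutative Hom-associative with representation $\mathfrak{L}$, Hom-pre-Lie $\Rightarrow$ Hom-Lie with representation $\huaL$), verify the Hom-Leibniz identity \eqref{hom-poisson} by expanding into $\diamond$- and $\ast$-monomials and applying \eqref{hom-pre-poisson-1} twice and \eqref{hom-pre-poisson-2} once, and observe that the representation conditions \eqref{rep-1} and \eqref{rep-2} evaluated on $z$ are exactly \eqref{hom-pre-poisson-2} and \eqref{hom-pre-poisson-1}. The only difference is cosmetic: you substitute the compatibility identities first and regroup, whereas the paper expands everything and lets the twelve monomials cancel.
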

\begin{proof}
Obviously, $(A,\cdot,\alpha)$ is a commutative Hom-associative algebra and $(A,[\cdot,\cdot],\alpha)$ is a Hom-Lie algebra. For all $x,y,z\in A$, by the definition of a  Hom-pre-Poisson algebra, we have
\begin{eqnarray}
\nonumber &&[\alpha(x),y\cdot z]-[x,y]\cdot \alpha(z)-\alpha(y)\cdot [x,z]\\
\nonumber&=&[\alpha(x),y\diamond z+z\diamond y]-(x\ast y-y\ast x)\cdot \alpha(z)-\alpha(y)\cdot (x \ast z-z\ast x)\\
\nonumber &=&\alpha(x)\ast(y\diamond z)+\alpha(x)\ast(z\diamond y)-(y\diamond z)\ast \alpha(x)-(z\diamond y)\ast \alpha(x)\\
\nonumber&&-(x\ast y)\diamond\alpha(z)+(y\ast x)\diamond\alpha(z)-\alpha(z)\diamond(x\ast y)+\alpha(z)\diamond(y\ast x)\\
\nonumber&&-\alpha(y)\diamond(x\ast z)+\alpha(y)\diamond(z\ast x)-(x\ast z)\diamond\alpha(y)+(z\ast x)\diamond\alpha(y)\\
\nonumber &=&0.
\end{eqnarray}
which implies that
$$[\alpha(x),y\cdot z]=\alpha(y)\cdot [x,z]+[x,y]\cdot \alpha(z).$$
Thus, $(A,\cdot,[\cdot,\cdot],\alpha)$ is a Hom-Poisson algebra.

The triple $(A,\mathfrak{L},\alpha)$ defines  a representation of the commutative Hom-associative algebra $(A,\cdot,\alpha)$ and $(A,\huaL,\alpha)$ is a representation of the Hom-Lie algebra $(A,[\cdot,\cdot],\alpha)$. For all $x,y,z\in A$, by the definition of a Hom-pre-Poisson algebra, we have
\begin{eqnarray}
\nonumber &&\huaL_{x\cdot y}\alpha(z)-\mathfrak{L}_{\alpha(y)}\huaL_x(z)-\mathfrak{L}_{\alpha(x)}\huaL_y(z)\\
\nonumber&=&(x\cdot y)\ast \alpha(z)-\alpha(y)\diamond (x\ast z)-\alpha(x)\diamond(y\ast z)\\
\nonumber &=&(x\diamond y)\ast \alpha(z)+(y\diamond x)\ast \alpha(z)-\alpha(y)\diamond(x\ast z)-\alpha(x)\diamond(y\ast z)\\
\nonumber &=&0,
\end{eqnarray}
which implies that
$$\huaL_{x\cdot y} \circ\alpha=\mathfrak{L}_{\alpha(y)} \circ \huaL_x+\mathfrak{L}_{\alpha(x)} \circ \huaL_y.$$
Similarly, we have
$$\huaL_{\alpha(x)}\circ \mathfrak{L}_y=\mathfrak{L}_{\alpha(y)}\circ \huaL_x+\mathfrak{L}_{[x,y]}\circ \alpha.$$
Thus, $(A,\alpha,\huaL,\mathfrak{L})$ is a representation of $A^C$. This finishes the proof.
\end{proof}
At the end of this section, we establish the relation between pre-Poisson algebras and Hom-pre-Poisson algebras.

\begin{pro}\label{prepoisson-homprepoisson}
Let $(A,\diamond,\ast)$ be a pre-Poisson algebra and $\alpha:A\longrightarrow A$ an algebra endomorphism. Define $\diamond_{\alpha}=\alpha\circ \diamond$ and $\ast_{\alpha}=\alpha\circ\ast$. Then $(A,\diamond_{\alpha},\ast_{\alpha},\alpha)$ is a Hom-pre-Poisson algebra. Moreover, suppose that $(A',\diamond',\ast')$ is another pre-Poisson algebra and $\alpha':A'\longrightarrow A'$ is an algebra endomorphism. If $f:A\longrightarrow A'$ is a pre-Poisson algebra morphism that satisfies $f\circ\alpha=\alpha'\circ f$, then $f:(A,\diamond_{\alpha},\ast_{\alpha},\alpha)\longrightarrow (A',\diamond'_{\alpha'},\ast'_{\alpha'},\alpha')$ is a morphism of Hom-pre-Poisson algebras.
\end{pro}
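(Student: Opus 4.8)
The statement is a standard "Yau twist" result: given an ordinary algebraic structure (here a pre-Poisson algebra) and an endomorphism $\alpha$, one twists all the operations by composing with $\alpha$ and obtains the corresponding Hom-structure. The plan is to verify directly the four defining axioms of a Hom-pre-Poisson algebra for $(A,\diamond_\alpha,\ast_\alpha,\alpha)$, namely the Hom-zinbiel identity, the Hom-pre-Lie identity, and the two compatibility conditions \eqref{hom-pre-poisson-1} and \eqref{hom-pre-poisson-2}, using only the fact that $(A,\diamond,\ast)$ satisfies the untwisted ($\alpha=\Id$) versions of these and that $\alpha$ is an endomorphism, i.e. $\alpha(x\diamond y)=\alpha(x)\diamond\alpha(y)$ and $\alpha(x\ast y)=\alpha(x)\ast\alpha(y)$.

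First I would record the bookkeeping identity that drives every computation. For the twisted zinbiel product, $x\diamond_\alpha y=\alpha(x\diamond y)$, so a doubly-nested term expands as
\begin{equation}
\nonumber
\alpha(x)\diamond_\alpha(y\diamond_\alpha z)=\alpha\bigl(\alpha(x)\diamond\alpha(y\diamond z)\bigr)=\alpha^2\bigl(x\diamond(y\diamond z)\bigr),
\end{equation}
where the last step uses that $\alpha$ is an algebra morphism for $\diamond$. The same pattern shows $(x\diamond_\alpha y)\diamond_\alpha\alpha(z)=\alpha^2\bigl((x\diamond y)\diamond z\bigr)$, and likewise every depth-two monomial in the twisted operations equals $\alpha^2$ applied to the corresponding untwisted monomial (mixing $\diamond$ and $\ast$ freely, since $\alpha$ is a morphism for both). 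Thus each Hom-axiom reduces, after factoring out the common $\alpha^2$, to exactly the corresponding $\alpha=\Id$ axiom for $(A,\diamond,\ast)$. Concretely, the Hom-zinbiel axiom becomes $\alpha^2$ of the zinbiel axiom; the Hom-pre-Lie axiom becomes $\alpha^2$ of the left-symmetry identity; and \eqref{hom-pre-poisson-1}, \eqref{hom-pre-poisson-2} become $\alpha^2$ of Aguiar's two compatibility conditions. Since $(A,\diamond,\ast)$ is a pre-Poisson algebra, all four untwisted identities hold, hence applying $\alpha^2$ to zero gives zero, and the twisted identities follow.

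For the morphism claim, I would simply check the three conditions of the definition of a Hom-pre-Poisson morphism. Using $f\circ\alpha=\alpha'\circ f$ together with $f(x\diamond y)=f(x)\diamond' f(y)$, we get
\begin{equation}
\nonumber
f(x\diamond_\alpha y)=f\bigl(\alpha(x\diamond y)\bigr)=\alpha'\bigl(f(x)\diamond' f(y)\bigr)=f(x)\diamond'_{\alpha'}f(y),
\end{equation}
and identically for $\ast$; the intertwining condition $f\circ\alpha=\alpha'\circ f$ is an assumption. The main (and only) obstacle worth flagging is purely clerical rather than conceptual: one must be careful that the placement of $\alpha$ in the twisted operations lines up so that, after using the morphism property of $\alpha$ to push it outward, every term in a given axiom carries precisely the same power of $\alpha$ (here $\alpha^2$) and can be factored out uniformly. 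No deeper structural input is needed, and the invertibility of $\alpha$ is not required for this proposition.
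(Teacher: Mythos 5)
Your proposal is correct and follows essentially the same route as the paper: direct verification that each twisted axiom reduces, via the morphism property of $\alpha$, to an application of $\alpha$'s powers to the corresponding untwisted pre-Poisson axiom, plus the same three-line check for the morphism claim. The only (cosmetic) difference is that the paper keeps the untwisted identity evaluated at $\alpha(x),\alpha(y),\alpha(z)$ under a single outer $\alpha$, whereas you push the morphism property once more to factor out $\alpha^2$ at $x,y,z$; these are the same computation.
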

\begin{proof}
For all $x,y,z\in A$, by the definition of a zinbiel algebra, we have
\begin{eqnarray*}
&&\alpha(x)\diamond_{\alpha}(y\diamond_{\alpha}z)-(x\diamond_{\alpha}y)\diamond_{\alpha}\alpha(z)-(y\diamond_{\alpha}x)\diamond_{\alpha}\alpha(z)\\
&=&\alpha(x)\diamond_{\alpha}(\alpha(y)\diamond \alpha(z))-(\alpha(x)\diamond \alpha(y))\diamond_{\alpha}\alpha(z)-(\alpha(y)\diamond \alpha(x))\diamond_{\alpha}\alpha(z)\\
&=&\alpha(\alpha(x)\diamond(\alpha(y)\diamond\alpha(z))-(\alpha(x)\diamond\alpha(y))\diamond\alpha(z)-(\alpha(y)\diamond\alpha(x))\diamond\alpha(z))\\
&=&0,
\end{eqnarray*}
which implies that $(A,\diamond_{\alpha},\alpha)$ is a Hom-zinbiel algebra. Similarly, we obtain that $(A,\ast_{\alpha},\alpha)$ is a Hom-pre-Lie algebra. By computation, we obtain that \eqref{hom-pre-poisson-1} and \eqref{hom-pre-poisson-2} hold. Therefore, $(A,\diamond_{\alpha},\ast_{\alpha},\alpha)$ is a Hom-pre-Poisson algebra.

For all $x,y\in A$, we have
\begin{equation}
\nonumber f(x\diamond_{\alpha}y)=f(\alpha(x)\diamond \alpha(y))=f(\alpha(x))\diamond' f(\alpha(y))=\alpha'(f(x))\diamond' \alpha'(f(y))=f(x)\diamond'_{\alpha'}f(y).
\end{equation}
Similarly, we have $f(x\ast_{\alpha}y)=f(x)\ast'_{\alpha'}f(y)$. Therefore, $f$ is a morphism from $(A,\diamond_{\alpha},\ast_{\alpha},\alpha)$ to $(A',\diamond'_{\alpha'},\ast'_{\alpha'},\alpha')$.
\end{proof}
\begin{cor}
Let $(A,\diamond,\ast,\alpha)$ be a regular Hom-pre-Poisson algebra. Define $\diamond_{\alpha^{-1}}=\alpha^{-1}\circ \diamond$ and $\ast_{\alpha^{-1}}=\alpha^{-1}\circ\ast$. Then $(A,\diamond_{\alpha^{-1}},\ast_{\alpha^{-1}})$ is a pre-Poisson algebra.
\begin{proof}
The proof is similar to Proposition \ref{prepoisson-homprepoisson}, we omit the details.
\end{proof}
\end{cor}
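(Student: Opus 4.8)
The plan is to verify directly that the four defining identities of a pre-Poisson algebra (the zinbiel identity, the pre-Lie identity, and the two compatibility conditions \eqref{hom-pre-poisson-1}--\eqref{hom-pre-poisson-2} specialized to $\alpha=\Id$) hold for the pair $(\diamond_{\alpha^{-1}},\ast_{\alpha^{-1}})$. This is precisely the inverse of the Yau-twisting procedure of Proposition \ref{prepoisson-homprepoisson}: there one twists a pre-Poisson algebra by an endomorphism, whereas here one untwists a regular Hom-pre-Poisson algebra by $\alpha^{-1}$.

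First I would record the elementary but crucial fact that, since $\alpha$ is an algebra morphism for both $\diamond$ and $\ast$, its inverse $\alpha^{-1}$ is a morphism as well, that is $\alpha^{-1}(x\diamond y)=\alpha^{-1}(x)\diamond \alpha^{-1}(y)$ and $\alpha^{-1}(x\ast y)=\alpha^{-1}(x)\ast \alpha^{-1}(y)$. From this I would extract the two ``slide'' identities that do all the work,
\[
\alpha^{-1}(w)\diamond z=\alpha^{-1}\big(w\diamond\alpha(z)\big),\qquad x\diamond\alpha^{-1}(w)=\alpha^{-1}\big(\alpha(x)\diamond w\big),
\]
together with the analogous pair for $\ast$; each follows at once by applying the morphism property and cancelling $\alpha^{-1}\circ\alpha$.

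The core computation is then uniform across all four axioms. For each one I would expand every occurrence of $\diamond_{\alpha^{-1}}$ and $\ast_{\alpha^{-1}}$ using their definitions, push all the inner $\alpha^{-1}$'s outward with the slide identities, and collect the powers of $\alpha$. In every case both sides reduce to $\alpha^{-2}$ applied to the corresponding Hom-expression carrying a single $\alpha$; for instance the zinbiel identity becomes
\[
\alpha^{-2}\big(\alpha(x)\diamond(y\diamond z)\big)=\alpha^{-2}\big((x\diamond y)\diamond\alpha(z)+(y\diamond x)\diamond\alpha(z)\big),
\]
which holds because $\alpha^{-2}$ is injective and the bracketed equality is exactly the Hom-zinbiel axiom. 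The pre-Lie identity collapses the same way to the Hom-pre-Lie axiom, and the two compatibility conditions reduce, after the identical bookkeeping, to \eqref{hom-pre-poisson-1} and \eqref{hom-pre-poisson-2}.

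Finally I would note that, since $\alpha=\Id$ in the untwisted setting, these four identities are precisely the pre-Poisson axioms of \cite{AGUIAR}, so $(A,\diamond_{\alpha^{-1}},\ast_{\alpha^{-1}})$ is a pre-Poisson algebra. I do not expect a genuine obstacle; the only point requiring care is the bookkeeping of the powers of $\alpha$ in the two compatibility conditions, where $\diamond$ and $\ast$ are interleaved and one must slide the correct $\alpha^{-1}$ through the correct factor so that both sides collapse to a common $\alpha^{-2}$-multiple of a Hom-pre-Poisson identity.
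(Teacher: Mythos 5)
Your proposal is correct and is essentially the argument the paper has in mind: it untwists each product via the morphism property of $\alpha^{-1}$ (the reverse of the Yau twist in Proposition \ref{prepoisson-homprepoisson}), collects both sides of each axiom under an injective power $\alpha^{-2}$, and invokes the corresponding Hom-identity. The paper omits exactly this routine verification, so your write-up simply supplies the details it skips.
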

\subsection{Hom-pre-Gerstenhaber algebras}

The notion of Hom-Gerstenhaber algebra was introduced in \cite{LGT} in the study of Hom-Lie algebroids.
\begin{defi}{\rm(\cite{LGT})}
A Hom-Gerstenhaber algebra is a quadruple $(A,\cdot,[\cdot,\cdot],\alpha)$, where $A$ is a graded vector space, $(A,\cdot,\alpha)$ is a graded commutative Hom-associative algebra of degree $0$, that is the following identities hold
\begin{equation}
\nonumber x\cdot y=(-1)^{|x||y|}y\cdot x,\quad\alpha(x)\cdot(y\cdot z)=(x \cdot y)\cdot\alpha(z), \quad\forall x,y,z \in A,
\end{equation}
$(A,[\cdot,\cdot],\alpha)$ is a graded Hom-Lie algebra of degree $-1$, that is the following identities hold
\begin{equation}
\nonumber[x,y]=-(-1)^{(|x|-1)(|y|-1)}[y,x], \quad [\alpha(x),[y,z]]=[[x,y],\alpha(z)]+(-1)^{(|x|-1)(|y|-1)}[\alpha(y),[x,z]],
\end{equation}
and $\alpha:A\longrightarrow A$ is an algebra morphism of degree $0$,
such that the following identity holds:
\begin{equation}
\nonumber [\alpha(x),y\cdot z]=[x,y]\cdot\alpha(z)+(-1)^{(|x|-1)|y|}\alpha(y)\cdot[x,z],
\end{equation}
\end{defi}

\begin{defi}
A Hom-pre-Gerstenhaber algebra is a quadruple $(A,\diamond,\ast,\alpha)$, where $A$ is a graded vector space, $(A,\diamond,\alpha)$ is a graded Hom-zinbiel algebra of degree $0$, that is the following identities hold
\begin{equation}\label{Hom-zinbiel}
 \alpha(x)\diamond(y\diamond z)=(-1)^{|x||y|}(y\diamond x)\diamond\alpha(z)+(x\diamond y)\diamond\alpha(z), \quad\forall x,y \in A,
\end{equation}
$(A,\ast,\alpha)$ is a graded Hom-pre-Lie algebra of degree $-1$,  that is the following identities hold

\begin{equation}
\nonumber(x\ast y)\ast\alpha(z)-\alpha(x)\ast(y\ast z)=(-1)^{(|x|-1)(|y|-1)}((y\ast x)\ast\alpha(z)-\alpha(y)\ast(x\ast z)),\quad\forall x,y,z\in A,
\end{equation}
and $\alpha:A\longrightarrow A$ is an algebra morphism of degree $0$, such that the following identities hold:
\begin{eqnarray}
  \label{eq:pre-gerstenhaber-1}(x\ast y-(-1)^{(|x|-1)(|y|-1)}y\ast x)\diamond\alpha(z)&=&\alpha(x)\ast(y\diamond z)-(-1)^{(|x|-1)|y|}\alpha(y)\diamond(x\ast z),\\
   \label{eq:pre-gerstenhaber-2}(x\diamond y+(-1)^{|x||y|}y\diamond x)\ast \alpha(z)&=&\alpha(x)\diamond(y\ast z)+(-1)^{|x||y|}\alpha(y)\diamond(x\ast z).
\end{eqnarray}
\end{defi}
\begin{thm}
Let $(A,\diamond,\ast,\alpha)$ be a Hom-pre-Gerstenhaber algebra. Define
$$x\cdot y=x\diamond y+(-1)^{|x||y|}y\diamond x,\quad [x,y]=x\ast y-(-1)^{(|x|-1)(|y|-1)}y\ast x, \quad \forall x,y \in A.$$
Then $(A,\cdot,[\cdot,\cdot],\alpha)$ is a Hom-Gerstenhaber algebra.
\end{thm}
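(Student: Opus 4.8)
The plan is to follow exactly the template of Theorem \ref{relation}, splitting the verification into three parts: that $(A,\cdot,\alpha)$ is a graded commutative Hom-associative algebra of degree $0$, that $(A,[\cdot,\cdot],\alpha)$ is a graded Hom-Lie algebra of degree $-1$, and that the graded Hom-Leibniz compatibility
\[
[\alpha(x),y\cdot z]=[x,y]\cdot\alpha(z)+(-1)^{(|x|-1)|y|}\alpha(y)\cdot[x,z]
\]
holds for all homogeneous $x,y,z\in A$. The conceptual content is identical to the ungraded case; the only new feature is that every interchange of two factors now produces a Koszul sign, so the work is bookkeeping rather than new ideas, and it suffices to check all identities on homogeneous elements.

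First I would dispose of the two sub-adjacent structures. Graded commutativity $x\cdot y=(-1)^{|x||y|}y\cdot x$ is immediate from the definition of $\cdot$, and graded skew-symmetry $[x,y]=-(-1)^{(|x|-1)(|y|-1)}[y,x]$ is immediate from the definition of $[\cdot,\cdot]$. The graded associativity $\alpha(x)\cdot(y\cdot z)=(x\cdot y)\cdot\alpha(z)$ follows by expanding both sides into $\diamond$-terms and repeatedly applying the graded Hom-zinbiel identity \eqref{Hom-zinbiel}; this is the graded analogue of the passage from a Hom-zinbiel algebra to a commutative Hom-associative algebra recorded in Section \ref{sec:pre}. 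Likewise the graded Hom-Jacobi identity for $[\cdot,\cdot]$ is the standard consequence of the graded Hom-pre-Lie identity, obtained by graded-antisymmetrizing the associator over $x,y,z$.

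The heart of the proof is the compatibility identity. I would expand $y\cdot z=y\diamond z+(-1)^{|y||z|}z\diamond y$ together with each bracket and product via their definitions, exactly as in the displayed computation in the proof of Theorem \ref{relation}, and then collect the resulting terms into two families. One family is resolved by the first compatibility axiom \eqref{eq:pre-gerstenhaber-1}, applied to suitable permutations of $x,y,z$, and the other by the second axiom \eqref{eq:pre-gerstenhaber-2}; once the Koszul signs dictated by the degrees are inserted, the two families vanish identically, yielding the claimed identity.

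The main obstacle is precisely this sign bookkeeping: one must verify that the signs produced by the definitions of $\cdot$ and $[\cdot,\cdot]$, after the permutations of arguments forced by \eqref{eq:pre-gerstenhaber-1} and \eqref{eq:pre-gerstenhaber-2}, agree with the target signs $(-1)^{(|x|-1)|y|}$, $(-1)^{|x||y|}$ and their relatives. A convenient consistency check is that setting $\alpha=\Id$ and all degrees equal to $0$ collapses every Koszul sign to $+1$ and recovers verbatim the cancellation in the proof of Theorem \ref{relation}, confirming that the sign conventions are the correct graded lift.
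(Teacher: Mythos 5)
Your proposal is correct and follows essentially the same route as the paper's proof: the paper likewise verifies graded commutativity and graded Hom-associativity by expanding into $\diamond$-terms via \eqref{Hom-zinbiel}, handles the graded Hom-Lie part with a ``similarly,'' and establishes the compatibility identity by expanding $[\alpha(x),y\cdot z]-[x,y]\cdot\alpha(z)-(-1)^{(|x|-1)|y|}\alpha(y)\cdot[x,z]$ into $\ast$- and $\diamond$-terms that cancel by \eqref{eq:pre-gerstenhaber-1} and \eqref{eq:pre-gerstenhaber-2}. The only difference is that the paper writes out the Koszul-sign bookkeeping explicitly, which you describe but defer.
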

\begin{proof}
For all $x,y\in A$, we have
\begin{equation}
\nonumber x\cdot y-(-1)^{|x||y|}y\cdot x=x\diamond y+(-1)^{|x||y|}y\diamond x-(-1)^{|x||y|}y\diamond x-(-1)^{2|x||y|}x\diamond y=0,
\end{equation}
which implies that $ x\cdot y=(-1)^{|x||y|}y\cdot x$.

For all $x,y,z\in A$, by \eqref{Hom-zinbiel}, we have
\begin{eqnarray*}&&\alpha(x)\cdot(y\cdot z)-(x\cdot y)\cdot\alpha(z)\\
&=&\alpha(x)\cdot(y\diamond z+(-1)^{|y||z|}z\diamond y)-(x\diamond y+(-1)^{|x||y|}y\diamond x)\cdot\alpha(z)\\
&=&\alpha(x)\diamond(y\diamond z)+(-1)^{|y||z|}\alpha(x)\diamond(z\diamond y)+(-1)^{|x|(|y|+|z|)}(y\diamond z)\diamond\alpha(x)+(-1)^{|x||y|+|x||z|+|y||z|}(z\diamond y)\diamond\alpha(x)\\
&&-(x\diamond y)\diamond\alpha(z)-(-1)^{|x||y|}(y\diamond x)\diamond\alpha(z)-(-1)^{(|x|+|y|)|z|}\alpha(z)\diamond(x\diamond y)-(-1)^{|x||y|+|x||z|+|y||z|}\alpha(z)\diamond(y\diamond x)\\
&=&0,
\end{eqnarray*}
which implies that $\alpha(x)\cdot(y\cdot z)=(x\cdot y)\cdot\alpha(z)$.

For all $x,y\in A$, we have
\begin{equation}
\nonumber \alpha(x \cdot y)=\alpha(x\diamond y+(-1)^{|x||y|}y\diamond x)=\alpha(x)\diamond \alpha(y)+(-1)^{|x||y|}\alpha(y)\diamond \alpha(x)=\alpha(x)\cdot \alpha(y).
\end{equation}
Thus, $(A,\cdot,\alpha)$ is a graded commutative Hom-associative algebra of degree $0$. Similarly, $(A,[\cdot,\cdot],\alpha)$ is a graded Hom-Lie algebra of degree $-1$.

For all $x,y,z\in A$, by \eqref{eq:pre-gerstenhaber-1} and \eqref{eq:pre-gerstenhaber-2}, we have
\begin{eqnarray*}&&[\alpha(x),y\cdot z]-[x,y]\cdot\alpha(z)-(-1)^{(|x|-1)|y|}\alpha(y)\cdot[x,z]\\
&=&[\alpha(x),y\diamond z+(-1)^{|y||z|}z\diamond y]-(x\ast y-(-1)^{(|x|-1)(|y|-1)}y\ast x)\cdot\alpha(z)\\
&&-(-1)^{(|x|-1)|y|}\alpha(y)\cdot(x\ast z-(-1)^{(|x|-1)(|z|-1)}z\ast x)\\
&=&\alpha(x)\ast(y\diamond z)+(-1)^{|y||z|}\alpha(x)\ast(z\diamond y)-(-1)^{(|x|-1)(|y|+|z|-1)}(y\diamond z)\ast\alpha(x)\\
&&-(-1)^{(|x|-1)(|y|+|z|-1)+|y||z|}(z\diamond y)\ast\alpha(x)-(x\ast y)\diamond\alpha(z)+(-1)^{(|x|-1)(|y|-1)}(y\ast x)\diamond\alpha(z)\\
&&-(-1)^{(|x|+|y|-1)|z|}\alpha(z)\diamond(x\ast y)+(-1)^{(|x|+|y|-1)|z|+(|x|-1)(|y|-1)}\alpha(z)\diamond(y\ast x)-(-1)^{(|x|-1)|y|}\alpha(y)\diamond(x\ast z)\\
&&+(-1)^{(|x|-1)(|y|+|z|-1)}\alpha(y)\diamond(z\ast x)-(-1)^{|y||z|}(x\ast z)\diamond\alpha(y)+(-1)^{(|x|-1)(|z|-1)+|y||z|}(z\ast x)\diamond\alpha(y)\\
&=&0,
\end{eqnarray*}
which implies that
$$[\alpha(x),y\cdot z]=[x,y]\cdot\alpha(z)+(-1)^{(|x|-1)|y|}\alpha(y)\cdot[x,z].$$
Therefore, $(A,\cdot,[\cdot,\cdot],\alpha)$ is a Hom-Gerstenhaber algebra.
\end{proof}

\section{Hom-dendriform formal deformations of Hom-zinbiel algebras}\label{sec:def}
In this section, we introduce the notion of Hom-dendriform formal deformations of Hom-zinbiel algebras and show that Hom-pre-Poisson algebras are the corresponding semi-classical limits.
\begin{defi}
Let $(A,\diamond,\alpha)$ be a Hom-zinbiel algebra. A {\bf Hom-dendriform formal deformation} of $A$ is a sequence of bilinear maps $\prec_i, \succ_i:A\otimes A\longrightarrow A$ with $x \prec_0 y=y \succ_0 x=x\diamond y$, such that $\prec_t,\succ_t$ on $A[[t]]$ defined  by
$$x\prec_t y=\sum_{i=0}^{+\infty}x\prec_i y t^i,\quad x\succ_t y=\sum_{i=0}^{+\infty}x\succ_i y t^i, $$
determine a  Hom-dendriform algebra.
\end{defi}

Let $(A,\diamond,\alpha)$ be a Hom-zinbiel algebra and $(A[[t]],\prec_t,\succ_t,\alpha)$ be a Hom-dendriform formal deformation of $A$. For all $x,y,z\in A$ and $n=1,2,\dots$,  we have
\begin{eqnarray}
\label{cocycle-1}\sum_{i+j=n\atop i,j\geq 0}((x\prec_i y)\prec_j\alpha(z)-\alpha(x)\prec_i(y\prec_j z+y\succ_j z))&=&0,\\
\label{cocycle-2}\sum_{i+j=n\atop i,j\geq 0}((x\succ_i y)\prec_j \alpha(z)-\alpha(x)\succ_i(y\prec_j z))&=&0,\\
\label{cocycle-3}\sum_{i+j=n\atop i,j\geq 0}(\alpha(x)\succ_i(y\succ_j z)-(x\prec_i y+x\succ_i y)\succ_j\alpha(z))&=&0.
\end{eqnarray}

\begin{thm}\label{semi-classical-limit}
Let $(A,\diamond,\alpha)$ be a Hom-zinbiel algebra and $(A[[t]],\prec_t,\succ_t,\alpha)$ be  a Hom-dendriform formal deformation of $A$. Define
$$x\ast y=x\succ_1 y-y\prec_1 x, \quad \forall x,y \in A.$$
Then $(A,\diamond,\ast,\alpha)$ is a Hom-pre-Poisson algebra.
\end{thm}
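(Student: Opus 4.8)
The plan is to verify the three defining requirements of a Hom-pre-Poisson algebra for $(A,\diamond,\ast,\alpha)$: that $(A,\diamond,\alpha)$ is a Hom-zinbiel algebra, that $(A,\ast,\alpha)$ is a Hom-pre-Lie algebra, and that the two compatibility conditions \eqref{hom-pre-poisson-1} and \eqref{hom-pre-poisson-2} hold. The first requirement is part of the hypothesis, so nothing is needed there. The second requirement reduces to a coefficient extraction over $A[[t]]$, and the third is read off from the order-$1$ deformation equations \eqref{cocycle-1}--\eqref{cocycle-3}.

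For the Hom-pre-Lie axiom I would work over $A[[t]]$. Since $(A[[t]],\prec_t,\succ_t,\alpha)$ is a Hom-dendriform algebra, the construction recalled in Section \ref{sec:pre} that assigns a Hom-pre-Lie algebra to a Hom-dendriform algebra shows that $x\ast_t y=x\succ_t y-y\prec_t x$ makes $(A[[t]],\ast_t,\alpha)$ a Hom-pre-Lie algebra. The key observation is that the order-$0$ term of $\ast_t$ vanishes: using $x\prec_0 y=x\diamond y=y\succ_0 x$ one gets $x\succ_0 y-y\prec_0 x=y\diamond x-y\diamond x=0$, so that $x\ast_t y=t(x\ast y)+O(t^2)$ with leading coefficient exactly $x\ast y=x\succ_1 y-y\prec_1 x$. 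Consequently the Hom-pre-Lie identity for $\ast_t$, which holds identically in $t$, is a power series whose lowest-order contribution is at $t^2$, arising only from products of the leading terms; its $t^2$-coefficient is precisely the Hom-pre-Lie identity for $\ast$. That $\alpha$ is multiplicative for $\ast$ follows from $\alpha$ being a morphism for each $\prec_i$ and $\succ_i$, a coefficient-wise consequence of $\alpha$ being a morphism of the Hom-dendriform algebra $A[[t]]$. This settles that $(A,\ast,\alpha)$ is a Hom-pre-Lie algebra.

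For the two compatibility conditions I would specialize the deformation equations \eqref{cocycle-1}, \eqref{cocycle-2}, \eqref{cocycle-3} to $n=1$. Writing out the sums over $(i,j)\in\{(0,1),(1,0)\}$ and substituting $x\prec_0 y=x\diamond y$ and $x\succ_0 y=y\diamond x$, each of the three equations becomes an identity involving only $\diamond$, $\prec_1$ and $\succ_1$. I would then insert the definition $x\ast y=x\succ_1 y-y\prec_1 x$ into the difference (left side minus right side) of \eqref{hom-pre-poisson-1}, respectively \eqref{hom-pre-poisson-2}, expand everything into the mixed monomials carrying one $\diamond$ and one $\prec_1$ or $\succ_1$, and check that the resulting expression is a linear combination of these three order-$1$ identities together with their images under the transposition $x\leftrightarrow y$ and relabellings of $(x,y,z)$.

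The routine but genuinely delicate part, and the step I expect to be the main obstacle, is the bookkeeping in this last matching: each compatibility condition expands into eight monomials, no single deformation equation produces all of them, and the permuted instances must be chosen so that every spurious monomial cancels. For instance, the $(x\diamond y)\succ_1\alpha(z)$-type terms in \eqref{hom-pre-poisson-2} are produced only by \eqref{cocycle-3}, whereas the $(y\diamond z)\prec_1\alpha(x)$-type term in \eqref{hom-pre-poisson-1} must be supplied by a relabelled copy of \eqref{cocycle-1} or \eqref{cocycle-2}. Since the corresponding statement for ordinary pre-Poisson algebras holds by the same mechanism and the twisting by $\alpha$ only decorates the monomials consistently on both sides, the cancellation is guaranteed; the work is simply to organize the computation so that it visibly closes.
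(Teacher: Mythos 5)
Your proposal is correct and follows essentially the same route as the paper: the Hom-pre-Lie axiom for $\ast$ is obtained as the $t^2$-coefficient of the axiom for $\ast_t$ on $A[[t]]$ (your explicit remark that $\ast_0=0$, so that only the $(1,1)$-terms survive, is exactly the point the paper uses implicitly), and the two compatibility conditions are extracted from the $n=1$ deformation equations \eqref{cocycle-1}--\eqref{cocycle-3} together with relabelled copies. The only difference is one of completeness: the paper carries out the eight-term cancellation for \eqref{hom-pre-poisson-1} explicitly, whereas you argue closure by analogy with the untwisted case --- an accurate prediction, but one that a final write-up would have to replace with the actual bookkeeping.
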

\begin{proof}
For all $x,y\in A$, define $x\ast_t y=x\succ_t y-y\prec_t x$, we obtain that $(A[[t]],\ast_t,\alpha)$ is a Hom-pre-Lie algebra. Thus, for all $x,y,z\in A$, since $(A[[t]],\ast_t,\alpha)$ is a Hom-pre-Lie algebra, we have
\begin{eqnarray}
\nonumber &&\sum_{i+j=n\atop i,j\geq 0}\Big((x\succ_i y-y\prec_i x)\succ_j \alpha(z)-\alpha(z)\prec_j (x\succ_i y-y\prec_i x)\\
\nonumber&&-\alpha(x)\succ_j(y\succ_i z-z\prec_i y)+(y\succ_i z-z\prec_i y)\prec_j \alpha(x)\\
\nonumber&&-(y\succ_i x-x\prec_i y)\succ_j \alpha(z)+\alpha(z)\prec_j (y\succ_i x-x\prec_i y)\\
\nonumber&&+\alpha(y)\succ_j(x\succ_i z-z\prec_i x)-(x\succ_i z-z\prec_i x)\prec_j \alpha(y)\Big)\\
\nonumber&=&0.
\end{eqnarray}
When $n=2$, we have
\begin{eqnarray}
\nonumber &&(x\succ_1 y-y\prec_1 x)\succ_1 \alpha(z)-\alpha(z)\prec_1 (x\succ_1 y-y\prec_1 x)\\
\nonumber&&-\alpha(x)\succ_1(y\succ_1 z-z\prec_1 y)+(y\succ_1 z-z\prec_1 y)\prec_1 \alpha(x)\\
\nonumber&&-(y\succ_1 x-x\prec_1 y)\succ_1 \alpha(z)+\alpha(z)\prec_1 (y\succ_1 x-x\prec_1 y)\\
\nonumber&&+\alpha(y)\succ_1(x\succ_1 z-z\prec_1 x)-(x\succ_1 z-z\prec_1 x)\prec_1 \alpha(y)\\
\nonumber&=&0,
\end{eqnarray}
which implies that
$$(x\ast y)\ast \alpha(z)-\alpha(x)\ast(y\ast z)-(y\ast x)\ast \alpha(z)+\alpha(y)\ast(x\ast z)=0.$$
Thus, we obtain that $(A,\ast,\alpha)$ is a Hom-pre-Lie algebra.

By \eqref{cocycle-1}, \eqref{cocycle-2} and \eqref{cocycle-3}, when $n=1$, we have
\begin{eqnarray}
\label{cocycle-4}&&(x\prec_0 y)\prec_1\alpha(z)-\alpha(x)\prec_0(y\prec_1 z+y\succ_1 z)\\
\nonumber&&+(x\prec_1 y)\prec_0\alpha(z)-\alpha(x)\prec_1(y\prec_0 z+y\succ_0 z)=0,\\
\label{cocycle-6}&&\alpha(x)\succ_1(y\succ_0 z)-(x\prec_1 y+x\succ_1 y)\succ_0\alpha(z)\\
\nonumber&&+\alpha(x)\succ_0(y\succ_1 z)-(x\prec_0 y+x\succ_0 y)\succ_1\alpha(z)=0,\\
\label{cocycle-5}&&(x\succ_1 y)\prec_0 \alpha(z)-\alpha(x)\succ_1(y\prec_0 z)+(x\succ_0 y)\prec_1 \alpha(z)-\alpha(x)\succ_0(y\prec_1 z)=0.
\end{eqnarray}
For all $x,y,z\in A$, by \eqref{cocycle-4}, \eqref{cocycle-6} and \eqref{cocycle-5}, we have
\begin{eqnarray}
\nonumber &&(x\ast y-y\ast x)\diamond\alpha(z)-\alpha(x)\ast(y\diamond z)+\alpha(y)\diamond(x\ast z)\\
\nonumber&=&(x\succ_1 y)\succ_0 \alpha(z)-(y\prec_1 x)\succ_0 \alpha(z)-(y\succ_1 x)\succ_0 \alpha(z)+(x\prec_1 y)\succ_0\alpha(z)\\
\nonumber &&-\alpha(x)\succ_1(y\succ_0 z)+(y\succ_0 z)\prec_1 \alpha(x)+\alpha(y)\succ_0(x\succ_1 z)-\alpha(y)\succ_0(z\prec_1 x)\\
\nonumber&=&(x\succ_1 y)\succ_0 \alpha(z)-(y\prec_1 x)\succ_0 \alpha(z)-(y\succ_1 x)\succ_0 \alpha(z)+(x\prec_1 y)\succ_0\alpha(z)\\
\nonumber &&-\alpha(x)\succ_1(y\succ_0 z)+\alpha(y)\succ_0(x\succ_1 z)+\alpha(y)\succ_1(z\prec_0 x)-(y\succ_1 z)\prec_0\alpha(x)\\
\nonumber&=&(x\succ_1 y)\succ_0 \alpha(z)+(x\prec_1 y)\succ_0\alpha(z)-\alpha(x)\succ_1(y\succ_0 z)-(y\succ_1 z)\prec_0\alpha(x)\\
\nonumber &&+(y\prec_0 x)\succ_1 \alpha(z)+(y\succ_0 x)\succ_1 \alpha(z)\\
\nonumber&=&\alpha(x)\succ_0(y\succ_1 z)-(x\prec_0 y)\succ_1 \alpha(z)-(x\succ_0 y)\succ_1\alpha(z)-(y\succ_1 z)\prec_0\alpha(x)\\
\nonumber &&+(y\prec_0 x)\succ_1 \alpha(z)+(y\succ_0 x)\succ_1 \alpha(z)\\
\nonumber &=&0,
\end{eqnarray}
which implies that
$$(x\ast y-y\ast x)\diamond\alpha(z)=\alpha(x)\ast(y\diamond z)-\alpha(y)\diamond(x\ast z).$$
Similarly, we have
$$(x\diamond y+y\diamond x)\ast \alpha(z)=\alpha(x)\diamond(y\ast z)+\alpha(y)\diamond (x\ast z).$$
Thus, $(A,\diamond,\ast,\alpha)$ is a Hom-pre-Poisson algebra.
\end{proof}
\begin{defi}
The Hom-pre-Poisson algebra $(A,\diamond,\ast,\alpha)$ given in Theorem \ref{semi-classical-limit} is called the {\bf semi-classical limit} of the Hom-dendriform algebra $(A[[t]],\prec_t,\succ_t,\alpha)$ and the Hom-dendriform algebra $(A[[t]],\prec_t,\succ_t,\alpha)$ is called the {\bf Hom-dendriform deformation quantization} of the Hom-zinbiel algebra $(A,\diamond,\alpha)$.

\end{defi}

\section{Hom-$\huaO$-operators on Hom-Poisson algebras}\label{sec:O}
In this section, we introduce the notion of Hom-$\huaO$-operator on Hom-Poisson algebra. On one hand, a Hom-$\huaO$-operator on a Hom-Poisson algebra gives a Hom-pre-Poisson algebra, and on the other hand, a Hom-pre-Poisson algebra naturally gives rise to  a Hom-$\huaO$-operator on the sub-adjacent Hom-Poisson algebra.
\begin{defi}
Let $(A,\cdot,\alpha)$ be a commutative Hom-associative algebra and $(V,\beta,\mu)$ its representation. A linear map $T:V\longrightarrow A$ is called a {\bf Hom-$\huaO$-operator} on $A$ with respect to $(V,\beta,\mu)$ if for all $u,v\in V$, the following equalities are satisfied
\begin{eqnarray}
  \label{eq:opreator-1}T\circ \beta&=&\alpha\circ T,\\
   \label{eq:operator-2}T(u)\cdot T(v)&=&T(\mu(T(\beta^{-1}(u)))(v)+\mu(T(\beta^{-1}(v)))(u)).
\end{eqnarray}
\end{defi}
\begin{lem}\label{O-Opreator-zinbiel}
Let $(A,\cdot,\alpha)$ be a commutative Hom-associative algebra and $(V,\beta,\mu)$ its representation. Let $T:V\longrightarrow A$ be a Hom-$\huaO$-operator on $A$ with respect to $(V,\beta,\mu)$. Then there exists a Hom-zinbiel algebra structure on $V$ given by
$$u\diamond_T v=\mu(T(\beta^{-1}(u)))v, \quad \forall u,v \in V.$$
\end{lem}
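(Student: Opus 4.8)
The plan is to verify that $(V,\diamond_T,\beta)$ meets the two defining requirements of a Hom-zinbiel algebra: that $\beta$ is an algebra morphism for $\diamond_T$, and that the Hom-zinbiel identity
$$\beta(u)\diamond_T(v\diamond_T w)=(u\diamond_T v)\diamond_T\beta(w)+(v\diamond_T u)\diamond_T\beta(w)$$
holds for all $u,v,w\in V$. Throughout I will combine the two representation axioms \eqref{hom-comm-rep-1}, \eqref{hom-comm-rep-2} of $(V,\beta,\mu)$ with the two defining identities \eqref{eq:opreator-1}, \eqref{eq:operator-2} of the Hom-$\huaO$-operator, using that $\beta$ is invertible.

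First I would establish the morphism property. Expanding $\beta(u\diamond_T v)=\beta\big(\mu(T(\beta^{-1}(u)))v\big)$ and applying \eqref{hom-comm-rep-1} in the form $\beta\circ\mu(x)=\mu(\alpha(x))\circ\beta$ with $x=T(\beta^{-1}(u))$, then rewriting $\alpha(T(\beta^{-1}(u)))=T(u)$ via \eqref{eq:opreator-1}, yields $\beta(u\diamond_T v)=\mu(T(u))\beta(v)=\beta(u)\diamond_T\beta(v)$. As an immediate consequence $\beta^{-1}(u\diamond_T v)=\beta^{-1}(u)\diamond_T\beta^{-1}(v)$, a small fact I will use repeatedly to push $\beta^{-1}$ through $\diamond_T$.

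The main work is the Hom-zinbiel identity, and the computation I would carry out runs as follows. Starting from the left-hand side $\beta(u)\diamond_T(v\diamond_T w)=\mu(T(u))\mu(T(\beta^{-1}(v)))w$, I would trade $T(u)$ for $\alpha(T(\beta^{-1}(u)))$ via \eqref{eq:opreator-1} and then fuse the two copies of $\mu$ using \eqref{hom-comm-rep-2}, $\mu(\alpha(x))\circ\mu(y)=\mu(x\cdot y)\circ\beta$, obtaining
$$\beta(u)\diamond_T(v\diamond_T w)=\mu\big(T(\beta^{-1}(u))\cdot T(\beta^{-1}(v))\big)\beta(w).$$
Next I would expand the product $T(\beta^{-1}(u))\cdot T(\beta^{-1}(v))$ by \eqref{eq:operator-2}; recognizing its two terms as $\beta^{-1}(u)\diamond_T\beta^{-1}(v)$ and $\beta^{-1}(v)\diamond_T\beta^{-1}(u)$ and applying the morphism fact, this product equals $T\big(\beta^{-1}(u\diamond_T v+v\diamond_T u)\big)$. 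Substituting back and reading off the definition of $\diamond_T$ gives precisely $(u\diamond_T v+v\diamond_T u)\diamond_T\beta(w)$, which is the right-hand side.

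The step I expect to be the main obstacle is the bookkeeping of the $\beta^{-1}$- and $\alpha$-twists: the crucial manoeuvre is trading the outer factor $T(u)$ for $\alpha(T(\beta^{-1}(u)))$ so that \eqref{hom-comm-rep-2} applies cleanly and collapses the two $\mu$'s into a single $\mu(\,\cdot\,)\circ\beta$, after which \eqref{eq:operator-2} can be invoked. Getting every $\beta^{-1}$ and $\alpha$ to cancel in the right places demands care, but no idea beyond the interplay of \eqref{hom-comm-rep-2}, \eqref{eq:opreator-1} and \eqref{eq:operator-2} is needed.
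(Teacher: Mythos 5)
Your proof is correct and follows essentially the same route as the paper's: first check that $\beta$ is multiplicative for $\diamond_T$ using \eqref{hom-comm-rep-1} and \eqref{eq:opreator-1}, then prove the Hom-zinbiel identity by fusing the two $\mu$'s via \eqref{hom-comm-rep-2} and expanding the resulting product $T(\beta^{-1}(u))\cdot T(\beta^{-1}(v))$ via \eqref{eq:operator-2}. If anything, your bookkeeping is slightly cleaner: the paper collapses the right-hand terms to $\mu\big(\alpha^{-1}(T(u)\cdot T(v))\big)\beta(w)$, implicitly invoking $\alpha^{-1}$, whereas your observation that $\beta^{-1}(u\diamond_T v)=\beta^{-1}(u)\diamond_T\beta^{-1}(v)$ keeps the whole computation inside $V$ and uses only the invertibility of $\beta$, which is what the hypotheses actually grant.
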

\begin{proof}
By \eqref{hom-comm-rep-1} and \eqref{eq:opreator-1}, for all $u,v\in V$, we have
\begin{equation}
\nonumber \beta(u\diamond_T v)=\beta(\mu(T(\beta^{-1}(u)))v)=\mu(\alpha(T(\beta^{-1}(u))))\beta(v)=\mu(T(u))\beta(v)=\beta(u)\diamond_T\beta(v).
\end{equation}
By \eqref{hom-comm-rep-2} and \eqref{eq:operator-2}, for all $u,v,w\in V$, we have
\begin{eqnarray}
\nonumber &&\beta(u)\diamond_T(v\diamond_T w)-(u\diamond_T v)\diamond_T \beta(w)-(v\diamond_T u)\diamond_T \beta(w)\\
\nonumber&=&\beta(u)\diamond_T \mu(T(\beta^{-1}(v)))(w)-\mu(T(\beta^{-1}(u)))(v)\diamond_T \beta(w)-\mu(T(\beta^{-1}(v)))(u)\diamond_T \beta(w)\\
\nonumber&=&\mu(T(u))\mu(T(\beta^{-1}(v)))(w)-\mu(T(\beta^{-1}(\mu(T(\beta^{-1}(u)))v)))\beta(w)-\mu(T(\beta^{-1}(\mu(T(\beta^{-1}(v)))u)))\beta(w)\\
\nonumber&=&\mu(T(u))\mu(T(\beta^{-1}(v)))(w)-\mu(\alpha^{-1}(T(u)\cdot T(v)))\beta(w)\\
\nonumber &=&0,
\end{eqnarray}
which implies that
$$\beta(u)\diamond_T(v\diamond_T w)=(u\diamond_T v)\diamond_T \beta(w)+(v\diamond_T u)\diamond_T \beta(w).$$
This finishes the proof.
\end{proof}

\begin{defi}{\rm(\cite{Cai-Sheng})}\label{O-Opreator-homlie}
Let $(A,[\cdot,\cdot],\alpha)$ be a Hom-Lie algebra and $(V,\beta,\rho)$ its representation. A linear map $T:V\longrightarrow A$ is called a {\bf Hom-$\huaO$-operator} on $A$ with respect to $(V,\beta,\rho)$ if for all $u,v\in V$, the following equalities are satisfied
\begin{eqnarray}
  \label{eq:opreator-3}T\circ \beta&=&\alpha\circ T,\\
   \label{eq:operator-4}[T(u),T(v)]&=&T(\rho(T(\beta^{-1}(u)))(v)-\rho(T(\beta^{-1}(v)))(u)).
\end{eqnarray}
\end{defi}
\begin{lem}\label{O-Opreator-prelie}
Let $(A,[\cdot,\cdot],\alpha)$ be a Hom-Lie algebra and $(V,\beta,\rho)$ its representation. Let $T:V\longrightarrow A$ be a Hom-$\huaO$-operator on $A$ with respect to $(V,\beta,\rho)$. Then there exists a Hom-pre-Lie algebra structure on $V$ given by
$$u\ast_T v=\rho(T(\beta^{-1}(u)))v, \quad \forall u,v \in V.$$
\end{lem}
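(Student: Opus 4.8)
The plan is to verify directly that the bilinear map $\ast_T$ defined by $u\ast_T v=\rho(T(\beta^{-1}(u)))v$ satisfies the two axioms of a Hom-pre-Lie algebra: the compatibility $\ast_T\circ(\beta\otimes\beta)=\beta\circ\ast_T$ (so that $\beta$ is an algebra morphism on $(V,\ast_T)$), and the Hom-left-symmetry identity
\begin{equation}
\nonumber (u\ast_T v)\ast_T \beta(w)-\beta(u)\ast_T(v\ast_T w)=(v\ast_T u)\ast_T \beta(w)-\beta(v)\ast_T(u\ast_T w).
\end{equation}
The whole computation parallels the proof of Lemma \ref{O-Opreator-zinbiel}, but now using the Hom-Lie representation axioms \eqref{hom-lie-rep-1} and \eqref{hom-lie-rep-2} together with the $\huaO$-operator conditions \eqref{eq:opreator-3} and \eqref{eq:operator-4}.

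First I would check that $\beta$ is a morphism. Computing $\beta(u\ast_T v)=\beta\big(\rho(T(\beta^{-1}(u)))v\big)$ and applying \eqref{hom-lie-rep-1} to move $\beta$ past $\rho$, together with $T\circ\beta=\alpha\circ T$ from \eqref{eq:opreator-3}, I expect to arrive at $\rho(T(u))\beta(v)=\beta(u)\ast_T\beta(v)$, exactly as the first display in the proof of Lemma \ref{O-Opreator-zinbiel}. This step is routine bookkeeping with the twisting maps.

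For the main identity I would expand each of the four terms using the definition of $\ast_T$. The key is the term $(u\ast_T v)\ast_T\beta(w)=\rho\big(T(\beta^{-1}(\rho(T(\beta^{-1}(u)))v))\big)\beta(w)$; here I would invoke \eqref{eq:operator-4} in the form $T(\rho(T(\beta^{-1}(u)))v-\rho(T(\beta^{-1}(v)))u)=[T(u),T(v)]$ to collapse the nested $T$ applied to an antisymmetrized argument into a bracket $[T(\beta^{-1}(u)),T(\beta^{-1}(v))]$ (after adjusting the $\beta$-powers via \eqref{eq:opreator-3}). Taking the difference of the two sides of the desired identity, the antisymmetric combination in $u,v$ produces precisely this bracket, and then \eqref{hom-lie-rep-2} rewrites $\rho([T(\beta^{-1}(u)),T(\beta^{-1}(v))])\circ\beta$ as $\rho(\alpha T(\beta^{-1}(u)))\rho(T(\beta^{-1}(v)))-\rho(\alpha T(\beta^{-1}(v)))\rho(T(\beta^{-1}(u)))$, which cancels against the remaining $\beta(u)\ast_T(v\ast_T w)$-type terms. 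The hard part will be the careful tracking of the $\beta^{-1}$ and $\alpha$ powers so that \eqref{eq:opreator-3} applies cleanly at each stage and the representation axiom \eqref{hom-lie-rep-2} fits with the correct twist on $\beta(w)$; once the bracket is exposed by the $\huaO$-operator condition, the cancellation is forced by the Hom-Jacobi-type structure encoded in \eqref{hom-lie-rep-2}, and everything reduces to $0$ just as in Lemma \ref{O-Opreator-zinbiel}. This finishes the proof.
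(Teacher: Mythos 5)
Your proposal is correct and follows essentially the same route as the paper: the paper's own ``proof'' of this lemma is just the remark that it is analogous to Lemma \ref{O-Opreator-zinbiel}, and your plan is precisely that transposition --- check that $\beta$ intertwines $\ast_T$ via \eqref{hom-lie-rep-1} and \eqref{eq:opreator-3}, antisymmetrize to expose $T$ of the skew combination, collapse it to $[T(\beta^{-1}(u)),T(\beta^{-1}(v))]$ via \eqref{eq:operator-4}, and cancel using \eqref{hom-lie-rep-2}. The only caveat, shared with the paper's zinbiel computation, is that rewriting $T\circ\beta^{-1}$ as $\alpha^{-1}\circ T$ tacitly uses invertibility of $\alpha$ (or else one must route the $\beta$-shifts through \eqref{hom-lie-rep-1} as you indicate), so your bookkeeping of twisting powers is exactly where the care is needed.
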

\begin{proof}
The proof is similar to Lemma \ref{O-Opreator-zinbiel}, we omit the details.
\end{proof}

\begin{defi}
Let $(A,\cdot,[\cdot,\cdot],\alpha)$ be a Hom-Poisson algebra and $(V,\beta,\rho,\mu)$ its representation. A linear map $T:V\longrightarrow A$ is called a {\bf Hom-$\huaO$-operator} on $A$ with respect to $(V,\beta,\rho,\mu)$ if $T$ is both a Hom-$\huaO$-operator on the commutative Hom-associative algebra $(A,\cdot,\alpha)$ and a Hom-$\huaO$-operator on the Hom-Lie algebra $(A,[\cdot,\cdot],\alpha).$
\end{defi}
\begin{pro}\label{O-operator-hom-poisson}
Let  $(A,\diamond,\ast,\alpha)$ be a Hom-pre-Poisson algebra. Then the algebra morphism $\alpha$ is a Hom-$\huaO$-operator on the sub-adjacent Hom-Poisson algebra $A^C$ with respect to the representation $(A,\alpha,\huaL,\mathfrak{L})$, where $\huaL_x(y)=x\ast y$ and $\mathfrak{L}_x(y)=x\diamond y$.
\end{pro}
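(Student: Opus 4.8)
The plan is to unwind the definition of a Hom-$\huaO$-operator on a Hom-Poisson algebra, which splits into two independent requirements: that $\alpha$ is a Hom-$\huaO$-operator on the commutative Hom-associative algebra $(A,\cdot,\alpha)$ with respect to $(A,\alpha,\mathfrak{L})$, and that $\alpha$ is a Hom-$\huaO$-operator on the Hom-Lie algebra $(A,[\cdot,\cdot],\alpha)$ with respect to $(A,\alpha,\huaL)$. By Theorem \ref{relation} the quadruple $(A,\alpha,\huaL,\mathfrak{L})$ is already a representation of $A^C$, so the only remaining work is to verify the defining identities \eqref{eq:opreator-1}--\eqref{eq:operator-2} and \eqref{eq:opreator-3}--\eqref{eq:operator-4} in the special case $T=\alpha$ and $\beta=\alpha$.

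First I would dispose of the commutation conditions \eqref{eq:opreator-1} and \eqref{eq:opreator-3}: with $T=\alpha$ and $\beta=\alpha$ both sides read $\alpha\circ\alpha$, so these hold automatically. The substantive observation is that, because $T=\alpha$ and $\beta=\alpha$, the composite $T\circ\beta^{-1}$ is the identity, so every occurrence of $T(\beta^{-1}(u))$ collapses to $u$; this is exactly what keeps the remaining computations short.

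For the commutative Hom-associative part I would substitute $T=\alpha$, $\beta=\alpha$, $\mu=\mathfrak{L}$ into \eqref{eq:operator-2}: the right-hand side becomes $\alpha(\mathfrak{L}_{x}(y)+\mathfrak{L}_{y}(x))=\alpha(x\diamond y+y\diamond x)=\alpha(x\cdot y)$, while the left-hand side is $\alpha(x)\cdot\alpha(y)$; the two agree because $\alpha$ is an algebra morphism for $\diamond$, hence for $\cdot$. Entirely analogously, for the Hom-Lie part I would substitute into \eqref{eq:operator-4} to obtain the right-hand side $\alpha(\huaL_{x}(y)-\huaL_{y}(x))=\alpha(x\ast y-y\ast x)=\alpha([x,y])$, which matches the left-hand side $[\alpha(x),\alpha(y)]$ since $\alpha$ is a morphism for $\ast$, hence for $[\cdot,\cdot]$.

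There is no genuine obstacle here: once the $\beta^{-1}$ terms are cancelled against $T=\alpha$, the morphism property of $\alpha$ does all the remaining work, and the two verifications combine to show that $\alpha$ is a Hom-$\huaO$-operator on $A^C$. The one point deserving care is that the Hom-$\huaO$-operator definition invokes $\beta^{-1}=\alpha^{-1}$, so the statement presupposes that $\alpha$ is invertible; I would make this regularity assumption explicit at the outset, after which the argument above is immediate.
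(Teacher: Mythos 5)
Your proof is correct and follows essentially the same route as the paper's: with $T=\beta=\alpha$ the composite $T\circ\beta^{-1}$ collapses to the identity, so both Hom-$\huaO$-operator identities reduce to the fact that $\alpha$ is an algebra morphism of the sub-adjacent Hom-Poisson algebra $A^C$ (i.e.\ $\alpha(x)\cdot\alpha(y)=\alpha(x\diamond y+y\diamond x)$ and $[\alpha(x),\alpha(y)]=\alpha(x\ast y-y\ast x)$). Your explicit observation that the statement tacitly requires $\alpha$ to be invertible (regularity) is a sensible addition, since the paper's proof likewise writes $\alpha(\alpha^{-1}(x))$ without stating that hypothesis.
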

\begin{proof}
Since $\alpha$ is an algebra morphism of the sub-adjacent Hom-Poisson algebra $A^C$, for all $x,y\in A$, we have
\begin{equation}
\nonumber \alpha(x)\cdot \alpha(y)=\alpha(x\cdot y)=\alpha(x\diamond y+\diamond\diamond x)=\alpha(\mathfrak{L}(\alpha(\alpha^{-1}(x)))y+\mathfrak{L}(\alpha(\alpha^{-1}(y)))x),
\end{equation}
which implies that $\alpha$ is a Hom-$\huaO$-operator on the commutative Hom-associative algebra. Similarly, we can prove that  $\alpha$ is a Hom-$\huaO$-operator on the Hom-Lie algebra. This finishes the proof.
\end{proof}
\begin{thm}\label{O-operator-L}
Let $(A,\cdot,[\cdot,\cdot],\alpha)$ be a Hom-Poisson algebra and $T:V\longrightarrow A$ be a Hom-$\huaO$-operator on $A$ with respect to the representation $(V,\beta,\rho,\mu)$. Define  new multiplications  $``\diamond_T$'' and $``\ast_T$'' by
\begin{equation}
\nonumber u\diamond_T v=\mu(T(\beta^{-1}(u)))v, \quad u\ast_T v=\rho(T(\beta^{-1}(u)))v,\quad\forall u,v \in V.
\end{equation}
Then $(V,\diamond_T,\ast_T,\beta)$ is a Hom-pre-Poisson algebra.
\end{thm}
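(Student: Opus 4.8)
The plan is to verify the four defining axioms of a Hom-pre-Poisson algebra for $(V,\diamond_T,\ast_T,\beta)$ in turn, with most of the work reducing to two already-proven lemmas plus the two Hom-Poisson representation compatibility identities. Since a Hom-$\huaO$-operator on the Hom-Poisson algebra is by definition simultaneously a Hom-$\huaO$-operator on the commutative Hom-associative algebra $(A,\cdot,\alpha)$ and on the Hom-Lie algebra $(A,[\cdot,\cdot],\alpha)$, Lemma \ref{O-Opreator-zinbiel} shows at once that $(V,\diamond_T,\beta)$ is a Hom-zinbiel algebra and Lemma \ref{O-Opreator-prelie} shows that $(V,\ast_T,\beta)$ is a Hom-pre-Lie algebra. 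Hence the genuine content is to check the two compatibility conditions \eqref{hom-pre-poisson-1} and \eqref{hom-pre-poisson-2} for the operations $\diamond_T$ and $\ast_T$.

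The computational device I would use throughout is the inverted operator relation $T\circ\beta^{-1}=\alpha^{-1}\circ T$, obtained from \eqref{eq:opreator-1}, together with the fact that $\alpha$ preserves both the product $\cdot$ and the bracket $[\cdot,\cdot]$. Writing $p=T(u)$ and $q=T(v)$, one has $u\diamond_T v=\mu(\alpha^{-1}(p))v$ and $u\ast_T v=\rho(\alpha^{-1}(p))v$, and applying the two operator identities lets me replace $T$ of a combined expression by a bracket or a product in $A$. This converts each compatibility condition into an identity involving only $\rho$, $\mu$ and the elements $p,q\in A$ acting on $w$.

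Concretely, for \eqref{hom-pre-poisson-1} I would apply \eqref{eq:operator-4} to get $T(u\ast_T v-v\ast_T u)=[T(u),T(v)]=[p,q]$, so the left-hand side equals $\mu(\alpha^{-1}([p,q]))\beta(w)$, while the right-hand side expands to $\rho(p)(\mu(\alpha^{-1}(q))w)-\mu(q)(\rho(\alpha^{-1}(p))w)$; the equality is then precisely the representation relation \eqref{rep-2} evaluated at $x=\alpha^{-1}(p)$, $y=\alpha^{-1}(q)$, using $[\alpha^{-1}(p),\alpha^{-1}(q)]=\alpha^{-1}([p,q])$. For \eqref{hom-pre-poisson-2} I would dually apply \eqref{eq:operator-2} to get $T(u\diamond_T v+v\diamond_T u)=T(u)\cdot T(v)=p\cdot q$, so the left-hand side equals $\rho(\alpha^{-1}(p\cdot q))\beta(w)$, while the right-hand side expands to $\mu(p)(\rho(\alpha^{-1}(q))w)+\mu(q)(\rho(\alpha^{-1}(p))w)$; this is exactly relation \eqref{rep-1} at $x=\alpha^{-1}(p)$, $y=\alpha^{-1}(q)$, using $\alpha^{-1}(p)\cdot\alpha^{-1}(q)=\alpha^{-1}(p\cdot q)$.

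There is no conceptual obstacle here: once the operator identities \eqref{eq:operator-2} and \eqref{eq:operator-4} are used to collapse $T$ of a symmetric or antisymmetric combination into a product or bracket of the images, everything lands squarely on the two Hom-Poisson representation compatibility conditions. The only delicate point, and the main source of possible error, is the consistent bookkeeping of the twisting maps $\alpha$, $\beta$ and their inverses together with the repeated substitution $T\circ\beta^{-1}=\alpha^{-1}\circ T$; keeping the $\alpha^{-1}$ factors aligned when shifting arguments is where care is required, but the algebra is otherwise routine.
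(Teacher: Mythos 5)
Your proposal is correct and follows essentially the same route as the paper's proof: invoke Lemma \ref{O-Opreator-zinbiel} and Lemma \ref{O-Opreator-prelie} for the Hom-zinbiel and Hom-pre-Lie structures, then collapse $T$ of the (anti)symmetrized products via \eqref{eq:operator-2} and \eqref{eq:operator-4} together with $T\circ\beta^{-1}=\alpha^{-1}\circ T$, and reduce the two compatibility conditions to \eqref{rep-1} and \eqref{rep-2}. The only cosmetic difference is your substitution $p=T(u)$, $q=T(v)$, which streamlines the same bookkeeping the paper carries out explicitly.
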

\begin{proof}
By Lemma \ref{O-Opreator-zinbiel} and Lemma \ref{O-Opreator-prelie}, we know that $(V,\diamond_T,\beta)$ is a Hom-zinbiel algebra and $(V,\ast_T,\beta)$ is a Hom-pre-Lie algebra. By \eqref{rep-2}, \eqref{eq:opreator-3} and \eqref{eq:operator-4}, for all $u,v,w\in V$, we have
\begin{eqnarray}
\nonumber &&(u\ast_T v-v\ast_T u)\diamond_T \beta(w)-\beta(u)\ast_T(v\diamond_T w)+\beta(v)\diamond_T(u\ast_T w)\\
\nonumber&=&(\rho(T(\beta^{-1}(u)))v-\rho(T(\beta^{-1}(v)))u)\diamond_T \beta(w)-\beta(u)\ast_T \mu(T(\beta^{-1}(v)))(w)+\beta(v)\diamond_T \rho(T(\beta^{-1}(u)))(w)\\
\nonumber&=&\mu(T(\beta^{-1}(\rho(T(\beta^{-1}(u)))v)))\beta(w)-\mu(T(\beta^{-1}(\rho(T(\beta^{-1}(v)))u)))\beta(w)\\
\nonumber&&-\rho(T(u))\mu(T(\beta^{-1}(v)))(w)+\mu(T(v))\rho(T(\beta^{-1}(u)))(w)\\
\nonumber&=&\mu(\alpha^{-1}[T(u),T(v)])\beta(w)-\rho(T(u))\mu(T(\beta^{-1}(v)))(w)+\mu(T(v))\rho(T(\beta^{-1}(u)))(w)\\
\nonumber &=&0,
\end{eqnarray}
which implies that
\begin{equation}\label{Hom-poisson-1}
(u\ast_T v-v\ast_T u)\diamond_T \beta(w)=\beta(u)\ast_T(v\diamond_T w)-\beta(v)\diamond_T(u\ast_T w).
\end{equation}
Similarly, we have
\begin{equation}\label{Hom-poisson-2}
(u\diamond_T v+v\diamond_T u)\ast_T \beta(w)=\beta(u)\diamond_T(v\ast_T w)+\beta(v)\diamond_T(u\ast_T w).
\end{equation}
Thus, by \eqref{Hom-poisson-1} and \eqref{Hom-poisson-2}, we obtain that $(V,\diamond_T,\ast_T,\beta)$ is a Hom-pre-Poisson algebra.
\end{proof}

\begin{cor}\label{O-operator-L1}
Let $(A,\cdot,[\cdot,\cdot],\alpha)$ be a Hom-Poisson algebra and $T:V\longrightarrow A$ be a Hom-$\huaO$-operator on $A$ with respect to the representation $(V,\beta,\rho,\mu)$. Then $T(V)=\{T(u)|u\in V\}\subset A$ is a subalgebra of $A$ and there is an induced Hom-pre-Poisson algebra structure on $T(V)$ given by
\begin{equation}
\nonumber  T(u)\diamond T(v)=T(u\diamond_T v), \quad T(u)\ast T(v)=T(u\ast_T v),  \quad \forall u,v \in V.
\end{equation}
\end{cor}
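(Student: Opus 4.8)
The plan is to realize $T$ as a surjective map onto its image that intertwines all the operations, so that the asserted structure on $T(V)$ is nothing but the push-forward of the Hom-pre-Poisson structure on $(V,\diamond_T,\ast_T,\beta)$ produced in Theorem \ref{O-operator-L}. First I would record that $T(V)$ is a subalgebra of the Hom-Poisson algebra $(A,\cdot,[\cdot,\cdot],\alpha)$. Closure under $\alpha$ is immediate from \eqref{eq:opreator-1}, since $\alpha(T(u))=T(\beta(u))$. Closure under the two products is read off directly from the Hom-$\huaO$-operator relations: \eqref{eq:operator-2} gives $T(u)\cdot T(v)=T(u\diamond_T v+v\diamond_T u)$ and \eqref{eq:operator-4} gives $[T(u),T(v)]=T(u\ast_T v-v\ast_T u)$, both of which lie in $T(V)$.

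The substantive point is that the prescriptions $T(u)\diamond T(v):=T(u\diamond_T v)$ and $T(u)\ast T(v):=T(u\ast_T v)$ must be checked to be independent of the chosen preimages, since $T$ need not be injective. Expanding $u\diamond_T v=\mu(T(\beta^{-1}(u)))v$ and $u\ast_T v=\rho(T(\beta^{-1}(u)))v$, independence of the \emph{left} argument is easy: if $T(w)=0$ then $T(\beta^{-1}(w))=\alpha^{-1}(T(w))=0$ by \eqref{eq:opreator-1}, whence $w\diamond_T v=\mu(0)v=0$ and $w\ast_T v=\rho(0)v=0$, so both products vanish. Independence of the \emph{right} argument is where the $\huaO$-operator identities do the real work. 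For $\diamond$, take $T(w)=0$; then $w\diamond_T u=0$ by the previous computation, so \eqref{eq:operator-2} yields $T(u\diamond_T w)=T(u\diamond_T w+w\diamond_T u)=T(u)\cdot T(w)=0$. For $\ast$, the same $w$ gives $w\ast_T u=0$, and \eqref{eq:operator-4} yields $T(u\ast_T w)=T(u\ast_T w-w\ast_T u)=[T(u),T(w)]=0$. Hence both operations descend to $T(V)$, and by construction $T\colon(V,\diamond_T,\ast_T,\beta)\to(T(V),\diamond,\ast,\alpha)$ is surjective, intertwines the products, and satisfies $T\circ\beta=\alpha\circ T$.

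Finally, since every element of $T(V)$ is of the form $T(u)$ and $T$ intertwines all operations, each Hom-pre-Poisson axiom on $T(V)$ is obtained by applying $T$ to the corresponding identity for $(V,\diamond_T,\ast_T,\beta)$, which holds by Theorem \ref{O-operator-L}. For instance the Hom-zinbiel identity $\alpha(T(u))\diamond(T(v)\diamond T(w))=(T(u)\diamond T(v))\diamond\alpha(T(w))+(T(v)\diamond T(u))\diamond\alpha(T(w))$ is exactly the image under $T$ of the Hom-zinbiel identity for $\diamond_T$, and likewise for the Hom-pre-Lie identity and the two compatibility conditions \eqref{hom-pre-poisson-1}--\eqref{hom-pre-poisson-2}. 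I expect the well-definedness step to be the only genuine obstacle; once it is in place the transfer of axioms is purely formal.
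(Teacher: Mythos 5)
Your proposal is correct, and it follows the route the paper intends: the paper states this corollary with no proof at all, treating it as an immediate consequence of Theorem \ref{O-operator-L}, i.e.\ as the pushforward along $T$ of the Hom-pre-Poisson structure $(V,\diamond_T,\ast_T,\beta)$. The genuine content you supply, which the paper skips entirely, is the well-definedness of $T(u)\diamond T(v):=T(u\diamond_T v)$ and $T(u)\ast T(v):=T(u\ast_T v)$ when $T$ is not injective; your kernel argument, deducing $T(u\diamond_T w)=T(u)\cdot T(w)=0$ and $T(u\ast_T w)=[T(u),T(w)]=0$ from \eqref{eq:operator-2} and \eqref{eq:operator-4} whenever $T(w)=0$, is exactly the right mechanism, and the subsequent transfer of the axioms along the surjection $T$ is, as you say, formal. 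One caveat: your step $T(w)=0\Rightarrow T(\beta^{-1}(w))=0$ rests on $T\circ\beta^{-1}=\alpha^{-1}\circ T$, which requires $\alpha$ to be injective (invertible); this is not a stated hypothesis of the corollary, but the paper tacitly assumes regularity of $\alpha$ throughout Section \ref{sec:O} (its proofs of Lemma \ref{O-Opreator-zinbiel}, Theorem \ref{O-operator-L} and Proposition \ref{Operator-L} all invoke $\alpha^{-1}$), so you are consistent with the paper's conventions. It would still be cleaner to state that assumption explicitly, since without injectivity of $\alpha$ the well-definedness argument breaks down and the claimed induced structure is not obviously defined.
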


\begin{pro}\label{Operator-L}
Let $(A,\cdot,[\cdot,\cdot],\alpha)$ be a Hom-Poisson algebra. Then there is a compatible Hom-pre-Poisson algebra structure on $A$ if and only if there exists an invertible Hom-$\huaO$-operator on $(A,\cdot,[\cdot,\cdot],\alpha)$.
\end{pro}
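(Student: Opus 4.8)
The plan is to prove the equivalence by treating its two implications with the two constructions already in hand: Proposition \ref{O-operator-hom-poisson} for the ``only if'' direction and Theorem \ref{O-operator-L} for the ``if'' direction. For the forward implication, suppose $(A,\diamond,\ast,\alpha)$ is a compatible Hom-pre-Poisson algebra of $A^C=(A,\cdot,[\cdot,\cdot],\alpha)$. I would invoke Proposition \ref{O-operator-hom-poisson} directly: the twisting map $\alpha$ is itself a Hom-$\huaO$-operator on $A^C$ with respect to the regular representation $(A,\alpha,\huaL,\mathfrak{L})$, where $\huaL_x(y)=x\ast y$ and $\mathfrak{L}_x(y)=x\diamond y$. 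Since the regular representation carries $\beta=\alpha$ and the Hom-$\huaO$-operator axioms are phrased via $\beta^{-1}$, the map $\alpha$ is invertible in this setting; hence $\alpha$ serves as the required invertible Hom-$\huaO$-operator. This direction is essentially immediate once Proposition \ref{O-operator-hom-poisson} is available.

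For the converse, let $T\colon V\to A$ be an invertible Hom-$\huaO$-operator on $(A,\cdot,[\cdot,\cdot],\alpha)$ with respect to a representation $(V,\beta,\rho,\mu)$. By Theorem \ref{O-operator-L}, the operations $u\diamond_T v=\mu(T(\beta^{-1}(u)))v$ and $u\ast_T v=\rho(T(\beta^{-1}(u)))v$ make $(V,\diamond_T,\ast_T,\beta)$ a Hom-pre-Poisson algebra. Since $T$ is a bijection intertwining $\beta$ and $\alpha$ (that is, $T\circ\beta=\alpha\circ T$ by \eqref{eq:opreator-1}), I would transport this structure along $T$ by setting
\begin{equation}
\nonumber x\diamond y=T\big(T^{-1}(x)\diamond_T T^{-1}(y)\big),\qquad x\ast y=T\big(T^{-1}(x)\ast_T T^{-1}(y)\big),\qquad \forall x,y\in A.
\end{equation}
Because $T$ is then an isomorphism of Hom-pre-Poisson algebras, $(A,\diamond,\ast,\alpha)$ is automatically a Hom-pre-Poisson algebra, so the only genuine work is to confirm that it is \emph{compatible} with the given Hom-Poisson structure.

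The crux is therefore to verify that the sub-adjacent Hom-Poisson algebra of $(A,\diamond,\ast,\alpha)$ in the sense of Theorem \ref{relation} is exactly $(A,\cdot,[\cdot,\cdot],\alpha)$. Writing $x=T(u)$ and $y=T(v)$, the symmetrized zinbiel product becomes $x\diamond y+y\diamond x=T(u\diamond_T v+v\diamond_T u)=T\big(\mu(T(\beta^{-1}(u)))v+\mu(T(\beta^{-1}(v)))u\big)$, which equals $T(u)\cdot T(v)=x\cdot y$ precisely by the associative Hom-$\huaO$-operator axiom \eqref{eq:operator-2}; the antisymmetrized pre-Lie product is handled in the same way, using \eqref{eq:operator-4} to recover $x\ast y-y\ast x=[x,y]$. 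I expect this compatibility check to be the main point of the argument, whereas the algebraic fact that transport along an isomorphism preserves the Hom-pre-Poisson axioms is routine. I would also note that invertibility of $T$ together with \eqref{eq:opreator-1} forces $\alpha=T\beta T^{-1}$ to be invertible, so both sides of the equivalence live in the regular setting, consistent with the invertibility used in the forward direction.
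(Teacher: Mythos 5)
Your proposal is correct and follows essentially the same route as the paper: the forward direction invokes Proposition \ref{O-operator-hom-poisson} to exhibit $\alpha$ as the invertible Hom-$\huaO$-operator, and the converse applies Theorem \ref{O-operator-L} together with transport along $T$ (which is precisely the content of Corollary \ref{O-operator-L1}, yielding the paper's formulas $x\diamond y=T(\mu(\alpha^{-1}(x))T^{-1}(y))$ and $x\ast y=T(\rho(\alpha^{-1}(x))T^{-1}(y))$ after using $T\circ\beta^{-1}=\alpha^{-1}\circ T$). Your explicit check that the symmetrization and antisymmetrization recover $\cdot$ and $[\cdot,\cdot]$ via \eqref{eq:operator-2} and \eqref{eq:operator-4} is a worthwhile detail that the paper leaves implicit in the word ``compatible.''
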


\begin{proof}
Let $T$ be an invertible Hom-$\huaO$-operator on $(A,\cdot,[\cdot,\cdot],\alpha)$. By Theorem \ref{O-operator-L} and Corollary \ref{O-operator-L1}, there exists a compatible Hom-pre-Poisson algebra on $T(V)$ given by
\begin{equation}
\nonumber x\diamond y=T(\mu(\alpha^{-1}(x))T^{-1}(y)), \quad x\ast y=T(\rho(\alpha^{-1}(x))T^{-1}(y)),\quad\forall x,y \in A.
\end{equation}

Conversely, let $(A,\diamond,\ast,\alpha)$ be a compatible Hom-pre-Poisson algebra of $(A,\cdot,[\cdot,\cdot],\alpha)$. Then by Proposition \ref{O-operator-hom-poisson}, we know that $\alpha:A\longrightarrow A$ is a Hom-$\huaO$-operator on $(A,\cdot,[\cdot,\cdot],\alpha)$ with respect to the representation $(A,\alpha,\huaL,\mathfrak{L})$.
\end{proof}

Let $A$ be a vector space. For all $\omega \in \wedge^2 A^\ast$, the linear map $\omega^\sharp:A \longrightarrow A^\ast$ is given by
\begin{equation}\label{hessian}
\langle \omega^\sharp(x),y\rangle=\omega(x,y),\quad \forall x,y\in A.
\end{equation}
\begin{thm}
Let $(A,\cdot,[\cdot,\cdot],\alpha)$ be a Hom-Poisson algebra and $\omega$ be a $2$-cocycle of the commutative Hom-associative algebra $(A,\cdot,\alpha)$, i.e.
\begin{equation}\label{comm-2-cocycle}
\omega(x\cdot y,\alpha(z))+\omega(y\cdot z,\alpha(x))+\omega(z\cdot x,\alpha(y))=0,  \quad \forall x,y,z\in A,
\end{equation}
as well as a $2$-cocycle of the Hom-Lie algebra $(A,[\cdot,\cdot],\alpha)$,  i.e.
\begin{equation}\label{lie-2-cocycle}
\omega([x, y],\alpha(z))+\omega([y,z],\alpha(x))+\omega([z,x],\alpha(y))=0,  \quad \forall x,y,z\in A.
\end{equation}
If $\omega$ satisfies
\begin{equation}\label{symplectic-structure}
\omega(x, y)=\omega(\alpha(x), \alpha(y)),  \quad \forall x,y,z\in A,
\end{equation}
then there is a compatible Hom-pre-Poisson algebra structure on $A$ given by
\begin{equation}
\omega(x\diamond y,z)=\omega(y,\alpha^{-1}(x)\cdot\alpha^{-2}(z)),\quad\omega(x \ast y,z)=-\omega(y,[\alpha^{-1}(x),\alpha^{-2}(z)]),  \quad \forall x,y,z\in A.
\end{equation}

\end{thm}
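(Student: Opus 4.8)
The plan is to verify directly that the two formulas produce a Hom-zinbiel product $\diamond$ and a Hom-pre-Lie product $\ast$ fulfilling the compatibility axioms \eqref{hom-pre-poisson-1}--\eqref{hom-pre-poisson-2}, by pairing every desired identity against an arbitrary element and using nondegeneracy of $\omega$. The formulas \eqref{hessian}-style presuppose that $\omega^\sharp$ is invertible, i.e. that $\omega$ is a symplectic form; I take this as part of the hypothesis, so that $x\diamond y$ and $x\ast y$ are well defined and an identity $P=Q$ in $A$ is equivalent to $\omega(P,w)=\omega(Q,w)$ for all $w$. First I would record the two consequences of the invariance \eqref{symplectic-structure} that are used throughout: replacing $y$ by $\alpha^{-1}(y)$ gives the sharpened form $\omega(\alpha(x),y)=\omega(x,\alpha^{-1}(y))$, which lets one relocate powers of $\alpha$ across the pairing, and from this one checks directly that $\alpha$ is multiplicative for both $\diamond$ and $\ast$ (e.g. $\omega(\alpha(x\diamond y),w)=\omega(\alpha(x)\diamond\alpha(y),w)$ for all $w$).

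The key structural step, which also establishes compatibility, is that the sub-adjacent products recover the given Poisson structure. Pairing $x\diamond y+y\diamond x$ against $z$ and invoking the commutative $2$-cocycle \eqref{comm-2-cocycle} together with commutativity and skew-symmetry of $\omega$ yields $x\diamond y+y\diamond x=x\cdot y$; pairing $x\ast y-y\ast x$ against $z$ and using the Lie $2$-cocycle \eqref{lie-2-cocycle} yields $x\ast y-y\ast x=[x,y]$. This identifies $A^C$ with $(A,\cdot,[\cdot,\cdot],\alpha)$ and reduces the remaining axioms to computations that may quote these two relations.

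Third, I would prove the defining identities. For the Hom-zinbiel axiom, pairing each of its three terms against $w$ and pushing all powers of $\alpha$ onto the final slot reduces it, after substituting $x\diamond y+y\diamond x=x\cdot y$, to the Hom-associativity of $(A,\cdot,\alpha)$; thus (given the sub-adjacent relation) the zinbiel axiom is exactly Hom-associativity. For the Hom-pre-Lie axiom one shows the associator $(x\ast y)\ast\alpha(z)-\alpha(x)\ast(y\ast z)$ is symmetric in $x,y$: after pairing and replacing $x\ast y-y\ast x$ by $[x,y]$, its antisymmetric part collapses to precisely the Hom-Jacobi identity of $(A,[\cdot,\cdot],\alpha)$. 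Finally, the mixed axioms \eqref{hom-pre-poisson-1} and \eqref{hom-pre-poisson-2} are treated the same way: pairing against $w$, relocating the $\alpha$'s, and inserting the sub-adjacent relations turns each into the Hom-Poisson Leibniz rule \eqref{hom-poisson} read with $x,y,z$ replaced by suitable $\alpha$-shifts. Once all four axioms hold and the sub-adjacent structure is the prescribed one, $(A,\diamond,\ast,\alpha)$ is the desired compatible Hom-pre-Poisson algebra.

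The main obstacle is bookkeeping rather than ideas: every expression carries several powers of $\alpha$, and the whole argument depends on using \eqref{symplectic-structure} in the form $\omega(\alpha(x),y)=\omega(x,\alpha^{-1}(y))$ to move those powers so that Hom-associativity, Hom-Jacobi and \eqref{hom-poisson} apply verbatim. I expect the Hom-pre-Lie identity to be the most delicate point, because there the cocycle condition enters only indirectly through $x\ast y-y\ast x=[x,y]$ and the required cancellation is exactly Hom-Jacobi, so any misplaced power of $\alpha$ would destroy it. I would also note that, although Proposition \ref{Operator-L} and Theorem \ref{O-operator-L} suggest building an invertible Hom-$\huaO$-operator $(\omega^\sharp)^{-1}$ and transporting the structure through the dual representation of Proposition \ref{dual-representation}, the direct verification above is cleaner: the correspondence in Proposition \ref{Operator-L} forces the twist $\mu(\alpha^{-1}(x))$, and tracking it against the $\alpha$-powers of $\rho^\star,-\mu^\star$ is more error-prone than checking the four identities by hand.
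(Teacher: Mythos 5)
Your proposal is correct, but it takes a genuinely different route from the paper. The paper never verifies the four Hom-pre-Poisson axioms directly: it shows that $T=(\omega^\sharp)^{-1}\circ(\alpha^{-1})^*$ intertwines the twist maps and, via the two cocycle conditions \eqref{comm-2-cocycle} and \eqref{lie-2-cocycle}, satisfies the Hom-$\huaO$-operator identities with respect to the dual representation $(A^*,(\alpha^{-1})^*,\ad^\star,-L^\star)$ of Proposition \ref{dual-representation}; it then invokes Proposition \ref{Operator-L} (built on Theorem \ref{O-operator-L}) to transport a Hom-pre-Poisson structure onto $A$, and only at the end computes that this structure is given by the stated formulas. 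You instead define $\diamond$ and $\ast$ directly by those formulas, derive the sub-adjacent relations $x\diamond y+y\diamond x=x\cdot y$ and $x\ast y-y\ast x=[x,y]$ from the cocycles plus the invariance \eqref{symplectic-structure}, and then reduce the Hom-zinbiel axiom to Hom-associativity, the Hom-pre-Lie axiom to Hom-Jacobi, and the mixed axioms \eqref{hom-pre-poisson-1}--\eqref{hom-pre-poisson-2} to the Hom-Leibniz rule \eqref{hom-poisson}; I checked these reductions and the $\alpha$-bookkeeping works out exactly as you claim. What the paper's route buys is economy and conceptual placement---the cocycle conditions are packaged once as $\huaO$-operator identities and all axiom-checking is delegated to previously proven results---at the cost of threading the computation through the $\beta^{-2}$-twisted duals $\rho^\star$ and $-\mu^\star$. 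What your route buys is a self-contained argument in which compatibility is manifest from the start (the sub-adjacent relations are proved first, not extracted afterwards), and it makes explicit a hypothesis the paper leaves tacit: $\omega$ must be nondegenerate (indeed $\omega^\sharp$ invertible) for either proof, since the defining formulas and the paper's $(\omega^\sharp)^{-1}$ both presuppose it. Your flagging of that point is a genuine improvement on the paper's statement.
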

\begin{proof}
By \eqref{hessian} and \eqref{symplectic-structure}, we obtain that $(\omega^\sharp)^{-1}\circ (\alpha^{-1})^\ast=\alpha\circ (\omega^\sharp)^{-1}.$ Thus, we have
\begin{equation}\label{hessian-operator1}
(\omega^\sharp)^{-1}\circ (\alpha^{-1})^\ast\circ (\alpha^{-1})^\ast=\alpha\circ (\omega^\sharp)^{-1}\circ (\alpha^{-1})^\ast.
\end{equation}
For all $x,y,z\in A, \xi,\eta,\gamma\in A^*$, set $x=\alpha((\omega^\sharp)^{-1}(\xi)), y=\alpha((\omega^\sharp)^{-1}(\eta)), z=\alpha((\omega^\sharp)^{-1}(\gamma))$, by \eqref{comm-2-cocycle}, we have
\begin{eqnarray*}&&\langle (\omega^\sharp)^{-1}((\alpha^{-1})^*(\xi))\cdot (\omega^\sharp)^{-1}((\alpha^{-1})^*(\eta))-(\omega^\sharp)^{-1}(\alpha^{-1})^*(-L^\star((\omega^\sharp)^{-1}(\alpha^{-1})^*(\alpha^*(\xi)))\eta\\
&&-L^\star((\omega^\sharp)^{-1}(\alpha^{-1})^*(\alpha^*(\eta)))\xi),(\alpha^{-2})^*(\gamma)\rangle\\
&=&\langle x\cdot y+(\omega^\sharp)^{-1}(\alpha^{-1})^*(L^\star_{(\omega^\sharp)^{-1}(\xi)}\eta+L^\star_{(\omega^\sharp)^{-1}(\eta)}\xi),(\alpha^{-2})^*(\gamma)\rangle \\
&=&\langle x \cdot y,\omega^\sharp(\alpha(z))\rangle-\langle L^\star_{\alpha^{-1}(x)}\omega^\sharp(\alpha^{-1}(y))+L^\star_{\alpha^{-1}(y)}\omega^\sharp(\alpha^{-1}(x)),z \rangle \\
&=&\omega(\alpha(z),x\cdot y)+\langle \omega^\sharp(\alpha^{-1}(y)),\alpha^{-2}(x)\cdot\alpha^{-2}(z)\rangle+\langle \omega^\sharp(\alpha^{-1}(x)),\alpha^{-2}(y)\cdot \alpha^{-2}(z)\rangle\\
&=&\omega(\alpha(z),x\cdot y)+\omega(\alpha(y),x\cdot z)+\omega(\alpha(x),y\cdot z)\\
&=&0,
\end{eqnarray*}
which implies that
\begin{eqnarray}\label{Hessian-o-operator-2}
\nonumber &&(\omega^\sharp)^{-1}((\alpha^{-1})^*(\xi))\cdot (\omega^\sharp)^{-1}((\alpha^{-1})^*(\eta))\\
 &=&(\omega^\sharp)^{-1}(\alpha^{-1})^*(-L^\star((\omega^\sharp)^{-1}(\alpha^{-1})^*(\alpha^*(\xi)))\eta-L^\star((\omega^\sharp)^{-1}(\alpha^{-1})^*(\alpha^*(\eta)))\xi).
\end{eqnarray}
 By \eqref{hessian-operator1} and \eqref{Hessian-o-operator-2}, we deduce that $(\omega^\sharp)^{-1}\circ (\alpha^{-1})^*$ is a Hom-$\huaO$-operator on the commutative Hom-associative algebra $(A,\cdot,\alpha)$ with respect to the representation $(A^*,(\alpha^{-1})^*,-L^\star)$. Similarly, we obtain that  $(\omega^\sharp)^{-1}\circ (\alpha^{-1})^*$ is a Hom-$\huaO$-operator on the Hom-Lie algebra $(A,[\cdot,\cdot],\alpha)$ with respect to the representation $(A^*,(\alpha^{-1})^*,\ad^\star)$. By Proposition \ref{Operator-L}, there is a compatible Hom-pre-Poisson algebra $(A,\diamond,\ast,\alpha)$. Moreover, for all $x,y,z \in A$, we have
\begin{eqnarray*}\omega(x\diamond y,z)
&=&\omega((\omega^\sharp)^{-1}(\alpha^{-1})^*(-L^\star_{\alpha^{-1}(x)}\alpha^*(\omega^\sharp(y))),z)\\
&=&-\langle L^\star_{\alpha^{-1}(x)}\alpha^*(\omega^\sharp(y)),\alpha^{-1}(z)\rangle\\
&=&\langle \omega^\sharp(y),\alpha^{-1}(x)\cdot\alpha^{-2}(z)\rangle\\
&=&\omega(y,\alpha^{-1}(x)\cdot\alpha^{-2}(z)).
\end{eqnarray*}
Similarly, we have $\omega(x\ast y,z)=-\omega(y,[\alpha^{-1}(x),\alpha^{-1}(z)])$. The proof is finished.
\end{proof}

\section{Dual-Hom-pre-Poisson algebras}\label{sec:dual}
In this section, we introduce the notions of  dual-Hom-pre-Poisson algebra and  Hom-average-operator on a Hom-Poisson algebra. We show that a Hom-average-operator on a Hom-Poisson algebra gives rise to a dual-Hom-pre-Poisson algebra.

First, we define Hom-permutative algebras.
\begin{defi}
A {\bf Hom-permutative algebra} is a triple $(A,\bullet,\alpha)$ consisting of a vector space $A$, a linear map $\bullet:A\otimes A\longrightarrow A$ and an algebra morphism $\alpha:A\longrightarrow A$ satisfying:
\begin{equation}
\alpha(x)\bullet (y\bullet z)=(x \bullet y)\bullet \alpha(z)=(y \bullet x)\bullet \alpha(z),\quad \forall x,y \in A.
\end{equation}
\end{defi}
The following Hom-average-operator provide a relationship  between Hom-permutative algebras and commutative Hom-associative algebras.
\begin{defi}
Let $(A,\cdot,\alpha)$ be a regular commutative Hom-associative algebra. A linear map $S:A\longrightarrow A$ is called a {\bf Hom-average-operator} on $A$ if for all $x,y\in A$, the following identities are satisfied
\begin{eqnarray}
  \label{eq:average-opreator-1}S\circ \alpha&=&\alpha\circ S,\\
   \label{eq:average-operator-2}S(x)\cdot S(y)&=&S(S(\alpha^{-1}(x))\cdot y).
\end{eqnarray}
\end{defi}

\begin{lem}\label{average-Opreator-permutative}
Let $(A,\cdot,\alpha)$ be a regular commutative Hom-associative algebra and $S:A\longrightarrow A$ a Hom-average-operator on $A$. Define a multiplication $``\bullet$'' on $A$ by
\begin{equation}
\nonumber x\bullet y=S(\alpha^{-1}(x))\cdot y,\quad \forall x,y \in A.
\end{equation}
Then $(A,\bullet,\alpha)$ is a Hom-permutative algebra.
\end{lem}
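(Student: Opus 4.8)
The plan is to verify directly the two defining identities of a Hom-permutative algebra for $\bullet$, namely $\alpha(x)\bullet(y\bullet z)=(x\bullet y)\bullet\alpha(z)$ and $(x\bullet y)\bullet\alpha(z)=(y\bullet x)\bullet\alpha(z)$, together with the requirement that $\alpha$ be an algebra morphism for $\bullet$. The only ingredients needed are Hom-associativity and commutativity of $\cdot$, the morphism property of the invertible map $\alpha$ (so that $\alpha^{-1}$ is again an algebra morphism), the commutation relation \eqref{eq:average-opreator-1} in the equivalent form $S\circ\alpha^{-1}=\alpha^{-1}\circ S$, and the defining relation \eqref{eq:average-operator-2}.

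First I would check the morphism property. Since $\alpha$ is an algebra morphism for $\cdot$ and $\alpha\circ S=S\circ\alpha$, one computes $\alpha(x\bullet y)=\alpha(S(\alpha^{-1}(x)))\cdot\alpha(y)=S(x)\cdot\alpha(y)$, and this coincides with $\alpha(x)\bullet\alpha(y)=S(\alpha^{-1}(\alpha(x)))\cdot\alpha(y)$. Thus $\alpha$ is an algebra morphism for $\bullet$.

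The heart of the argument is a single reduction carried out for both sides of the first identity. On the left, writing $S(x)=\alpha(S(\alpha^{-1}(x)))$ and expanding $y\bullet z=S(\alpha^{-1}(y))\cdot z$, Hom-associativity turns $\alpha(x)\bullet(y\bullet z)$ into $(S(\alpha^{-1}(x))\cdot S(\alpha^{-1}(y)))\cdot\alpha(z)$; applying \eqref{eq:average-operator-2} to the inner product then yields
\begin{equation}
\nonumber \alpha(x)\bullet(y\bullet z)=S\big(S(\alpha^{-2}(x))\cdot\alpha^{-1}(y)\big)\cdot\alpha(z).
\end{equation}
On the right, expanding $(x\bullet y)\bullet\alpha(z)=S(\alpha^{-1}(x\bullet y))\cdot\alpha(z)$ and using that $\alpha^{-1}$ is an algebra morphism commuting with $S$ gives $\alpha^{-1}(x\bullet y)=S(\alpha^{-2}(x))\cdot\alpha^{-1}(y)$, so that $(x\bullet y)\bullet\alpha(z)$ equals exactly the same expression. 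This settles the first identity.

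Finally, for the symmetry $(x\bullet y)\bullet\alpha(z)=(y\bullet x)\bullet\alpha(z)$, I would exploit commutativity of $\cdot$: applying it inside \eqref{eq:average-operator-2} shows $S(S(\alpha^{-1}(u))\cdot v)=S(S(\alpha^{-1}(v))\cdot u)$ for all $u,v$, and specializing $u=\alpha^{-1}(x)$, $v=\alpha^{-1}(y)$ gives $S(S(\alpha^{-2}(x))\cdot\alpha^{-1}(y))=S(S(\alpha^{-2}(y))\cdot\alpha^{-1}(x))$; multiplying by $\alpha(z)$ on the right and invoking the reduction above yields the claim. I do not expect a genuine obstacle here; the only point requiring care is the systematic bookkeeping of the powers of $\alpha^{\pm1}$ when commuting $S$ and $\alpha^{-1}$ past one another, and choosing the correct direction of Hom-associativity by pre-writing $S(x)$ as $\alpha(S(\alpha^{-1}(x)))$.
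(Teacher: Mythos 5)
Your proof is correct and follows essentially the same route as the paper: a direct verification of the morphism property and the two Hom-permutative identities using Hom-associativity, commutativity, the commutation $S\circ\alpha=\alpha\circ S$, and the average-operator identity \eqref{eq:average-operator-2}. The only cosmetic difference is that you reduce both sides to the common normal form $S\big(S(\alpha^{-2}(x))\cdot\alpha^{-1}(y)\big)\cdot\alpha(z)$ and spell out the symmetry step that the paper dismisses with ``similarly,'' which is a fine way to organize the same computation.
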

\begin{proof}
 For all $x,y\in A$, by \eqref{eq:average-opreator-1}, we have
\begin{equation}
\nonumber \alpha(x\bullet y)=\alpha(S(\alpha^{-1}(x))\cdot y)=S(x)\cdot \alpha(y)=\alpha(x)\bullet \alpha(y).
\end{equation}
For all $x,y,z\in A$, by \eqref{eq:average-opreator-1}, \eqref{eq:average-operator-2} and the definition of commutative Hom-associative algebra, we have
\begin{eqnarray*}
\alpha(x)\bullet(y\bullet z)-(x\bullet y)\bullet\alpha(z)
&=&\alpha(x)\bullet(S(\alpha^{-1}(y))\cdot z)-(S(\alpha^{-1}(x))\cdot y)\bullet\alpha(z)\\
&=&S(x) \cdot (\alpha^{-1}(S(y))\cdot z)-S(\alpha^{-1}(S(\alpha^{-1}(x))\cdot y))\cdot\alpha(z)\\
&=&S(x)\cdot (\alpha^{-1}(S(y))\cdot z)-(\alpha^{-1}(S(x))\cdot\alpha^{-1}(S(y)))\cdot\alpha(z)\\
&=&0,
\end{eqnarray*}
which implies that
$$\alpha(x)\bullet(y\bullet z)=(x\bullet y)\bullet\alpha(z).$$
Similarly, we have
$$(x\bullet y)\bullet\alpha(z)=(y\bullet x)\bullet\alpha(z).$$
Therefore, $(A,\bullet,\alpha)$ is a Hom-permutative algebra.
\end{proof}
\begin{defi}{\rm(\cite{MS2})}
A {\bf Hom-Leibniz algebra} is a triple $(A,\{\cdot,\cdot\},\alpha)$ consisting of a vector space $A$, a linear map $\{\cdot,\cdot\}:A\otimes A\longrightarrow A$ and an algebra morphism $\alpha:A\longrightarrow A$ satisfying:
\begin{equation}
\{\{x,y\},\alpha(z)\}=\{\alpha(x),\{y,z\}\}-\{\alpha(y),\{x,z\}\},\quad \forall x,y \in A.
\end{equation}
\end{defi}
\begin{defi}
Let $(A,[\cdot,\cdot],\alpha)$ be a regular Hom-Lie algebra. A linear map $S:A\longrightarrow A$ is called a {\bf Hom-average-operator} on $A$ if for all $x,y\in A$, the following identities are satisfied
\begin{eqnarray}
  \label{eq:average-opreator-3}S\circ \alpha&=&\alpha\circ S,\\
   \label{eq:average-operator-4}[S(x),S(y)]&=&S[S(\alpha^{-1}(x)),y].
\end{eqnarray}
\end{defi}
\begin{lem}\label{average-Opreator-Leibniz}
Let $(A,[\cdot,\cdot],\alpha)$ be a regular Hom-Lie algebra and $S:A\longrightarrow A$ be a Hom-average-operator on $A$. Define a bracket $\{\cdot,\cdot\}$ on $A$ by
\begin{equation}
\nonumber \{x,y\}=[S(\alpha^{-1}(x)),y],\quad \forall x,y \in A.
\end{equation}
Then $(A,\{\cdot,\cdot\},\alpha)$ is a Hom-Leibniz algebra.
\end{lem}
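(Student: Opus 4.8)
The plan is to follow the same two-step strategy used in the proof of Lemma \ref{average-Opreator-permutative}: first verify that $\alpha$ is an algebra morphism for the new bracket $\{\cdot,\cdot\}$, and then establish the Hom-Leibniz identity by reducing it, via the defining properties \eqref{eq:average-opreator-3} and \eqref{eq:average-operator-4} of the Hom-average-operator, to the Hom-Jacobi identity of $(A,[\cdot,\cdot],\alpha)$. Regularity of $\alpha$ is used throughout so that $\alpha^{-1}$ exists, is a Hom-Lie morphism, and commutes with $S$ (equivalently $S\circ\alpha^{-1}=\alpha^{-1}\circ S$, which follows from \eqref{eq:average-opreator-3}).

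For the morphism condition I would compute directly, commuting $\alpha$ past $S$ by \eqref{eq:average-opreator-3}:
\begin{equation}
\nonumber \alpha(\{x,y\})=\alpha([S(\alpha^{-1}(x)),y])=[\alpha(S(\alpha^{-1}(x))),\alpha(y)]=[S(x),\alpha(y)],
\end{equation}
while $\{\alpha(x),\alpha(y)\}=[S(\alpha^{-1}(\alpha(x))),\alpha(y)]=[S(x),\alpha(y)]$, so the two coincide.

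The heart of the argument is a single simplification. Applying \eqref{eq:average-operator-4} with arguments $\alpha^{-1}(x)$ and $\alpha^{-1}(y)$, together with the fact that $\alpha^{-1}$ is a bracket morphism commuting with $S$, I expect to obtain
\begin{equation}
\nonumber S\big(\alpha^{-1}(\{x,y\})\big)=S\big([S(\alpha^{-2}(x)),\alpha^{-1}(y)]\big)=[S(\alpha^{-1}(x)),S(\alpha^{-1}(y))].
\end{equation}
Writing $p=S(\alpha^{-1}(x))$ and $q=S(\alpha^{-1}(y))$, so that $S(x)=\alpha(p)$ and $S(y)=\alpha(q)$ by \eqref{eq:average-opreator-3}, the three terms of the Hom-Leibniz identity unwind to
\begin{equation}
\nonumber \{\{x,y\},\alpha(z)\}=[[p,q],\alpha(z)],\quad \{\alpha(x),\{y,z\}\}=[\alpha(p),[q,z]],\quad \{\alpha(y),\{x,z\}\}=[\alpha(q),[p,z]].
\end{equation}
Thus the required identity becomes exactly $[[p,q],\alpha(z)]=[\alpha(p),[q,z]]-[\alpha(q),[p,z]]$, which is a rearrangement of the Hom-Jacobi identity $[\alpha(p),[q,z]]+[\alpha(q),[z,p]]+[\alpha(z),[p,q]]=0$ after applying skew-symmetry to the last two brackets.

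The main obstacle is the nested simplification $S(\alpha^{-1}(\{x,y\}))=[S(\alpha^{-1}(x)),S(\alpha^{-1}(y))]$: it requires carefully matching the inner argument so that the pattern $S[S(\alpha^{-1}(a)),b]$ of \eqref{eq:average-operator-4} applies, and using regularity to shuffle the powers of $\alpha^{-1}$ into position before invoking \eqref{eq:average-operator-4}. Once this identity is in hand, the remainder of the proof uses only the Hom-Jacobi identity and skew-symmetry of $[\cdot,\cdot]$, with no further appeal to the average-operator hypotheses, completing the verification that $(A,\{\cdot,\cdot\},\alpha)$ is a Hom-Leibniz algebra.
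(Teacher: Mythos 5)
Your proposal is correct and follows essentially the same strategy as the paper: the paper omits the details, stating only that the proof is analogous to Lemma \ref{average-Opreator-permutative}, and your computation is precisely that analogue --- verify that $\alpha$ is a morphism for $\{\cdot,\cdot\}$, use \eqref{eq:average-opreator-3} and the nested application of \eqref{eq:average-operator-4} to collapse $S(\alpha^{-1}(\{x,y\}))$ into $[S(\alpha^{-1}(x)),S(\alpha^{-1}(y))]$, and then reduce the Hom-Leibniz identity to the Hom-Jacobi identity of $(A,[\cdot,\cdot],\alpha)$. The substitution $p=S(\alpha^{-1}(x))$, $q=S(\alpha^{-1}(y))$ is a clean way to organize exactly the computation the paper has in mind.
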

\begin{proof}
The proof is similar to Lemma \ref{average-Opreator-permutative}, we omit the details.
\end{proof}
\begin{defi}
A {\bf dual-Hom-pre-Poisson algebra} is a quadruple $(A,\bullet,\{\cdot,\cdot\},\alpha)$, where $(A,\bullet,\alpha)$ is a Hom-permutative algebra and $(A,\{\cdot,\cdot\},\alpha)$ is a Hom-Leibniz algebra, such that for all $x,y,z\in A$, the following conditions hold:
\begin{eqnarray}
\label{dual-hom-pre-poisson-1}\{\alpha(x),y\bullet z\}=\{x,y\}\bullet \alpha(z)+\alpha(y)\bullet \{x,z\},\\
\label{dual-hom-pre-poisson-2}\{x\bullet y,\alpha(z)\}=\alpha(x)\bullet\{y,z\}+\alpha(y)\bullet\{x,z\},\\
\label{dual-hom-pre-poisson-3}\{x,y\}\bullet\alpha(z)+\{y,x\}\bullet\alpha(z)=0.
\end{eqnarray}
\end{defi}

\begin{defi}
Let $(A,\cdot,[\cdot,\cdot],\alpha)$ be a regular Hom-Poisson algebra. A linear map $S:A\longrightarrow A$ is called a {\bf Hom-average-operator} on $A$ if $S$ is both a Hom-average-operator on the commutative Hom-associative algebra $(A,\cdot,\alpha)$ and a Hom-average-operator on the Hom-Lie algebra $(A,[\cdot,\cdot],\alpha).$
\end{defi}

\begin{thm}
Let $(A,\cdot,[\cdot,\cdot],\alpha)$ be a regular Hom-Poisson algebra and $S:A\longrightarrow A$ a Hom-average-operator on $A$. Define $``\bullet$'' and $``\{\cdot,\cdot\}$'' by
\begin{equation}
\nonumber x\bullet y=S(\alpha^{-1}(x))\cdot y, \quad \{x,y\}=[S(\alpha^{-1}(x)),y],\quad\forall x,y \in A.
\end{equation}
Then $(A,\bullet,\{\cdot,\cdot\},\alpha)$ is a dual-Hom-pre-Poisson algebra.
\end{thm}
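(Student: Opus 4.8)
The plan is to dispatch the two underlying structures immediately and then verify the three compatibility axioms one at a time. Since $S$ is by hypothesis a Hom-average-operator on both the commutative Hom-associative algebra $(A,\cdot,\alpha)$ and the Hom-Lie algebra $(A,[\cdot,\cdot],\alpha)$, Lemma \ref{average-Opreator-permutative} gives that $(A,\bullet,\alpha)$ is a Hom-permutative algebra and Lemma \ref{average-Opreator-Leibniz} gives that $(A,\{\cdot,\cdot\},\alpha)$ is a Hom-Leibniz algebra. Hence only \eqref{dual-hom-pre-poisson-1}, \eqref{dual-hom-pre-poisson-2} and \eqref{dual-hom-pre-poisson-3} remain. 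Throughout I would use freely that $\alpha$ is invertible and commutes with $S$ (so $S\circ\alpha^{-1}=\alpha^{-1}\circ S$), and that $\alpha$ is a morphism for both $\cdot$ and $[\cdot,\cdot]$.

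The key preliminary step is to record that $S$ linearises the twisted operations. From \eqref{eq:average-operator-2} one gets $S(x\bullet y)=S(x)\cdot S(y)$, and therefore $S(\alpha^{-1}(x\bullet y))=S(\alpha^{-1}(x))\cdot S(\alpha^{-1}(y))$; from \eqref{eq:average-operator-4} one gets $S(\{x,y\})=[S(x),S(y)]$, and therefore $S(\alpha^{-1}(\{x,y\}))=\alpha^{-1}[S(x),S(y)]=[S(\alpha^{-1}(x)),S(\alpha^{-1}(y))]$. These identities let me rewrite the outermost occurrence of $\bullet$ or $\{\cdot,\cdot\}$ back into $\cdot$, $[\cdot,\cdot]$ and images of $S$, which is exactly the form to which the Hom-Poisson compatibility \eqref{hom-poisson} applies.

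For \eqref{dual-hom-pre-poisson-1} I would expand the left-hand side as $[S(x),S(\alpha^{-1}(y))\cdot z]$, write $S(x)=\alpha(S(\alpha^{-1}(x)))$, and apply \eqref{hom-poisson}; the two resulting terms $[S(\alpha^{-1}(x)),S(\alpha^{-1}(y))]\cdot\alpha(z)$ and $\alpha(S(\alpha^{-1}(y)))\cdot[S(\alpha^{-1}(x)),z]$ are, by the preliminary identities and $\alpha\circ S=S\circ\alpha$, precisely $\{x,y\}\bullet\alpha(z)$ and $\alpha(y)\bullet\{x,z\}$. For \eqref{dual-hom-pre-poisson-2} I would instead use the first preliminary identity to turn the left-hand side into $[S(\alpha^{-1}(x))\cdot S(\alpha^{-1}(y)),\alpha(z)]$, apply \eqref{hom-poisson} after a skew-symmetry flip, and read off the two summands as $\alpha(x)\bullet\{y,z\}$ and $\alpha(y)\bullet\{x,z\}$, invoking commutativity of $\cdot$ to match the factors. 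Finally \eqref{dual-hom-pre-poisson-3} is the lightest: the second preliminary identity rewrites $\{x,y\}\bullet\alpha(z)$ as $\alpha^{-1}[S(x),S(y)]\cdot\alpha(z)$, so the sum with $\{y,x\}\bullet\alpha(z)$ vanishes by skew-symmetry of $[\cdot,\cdot]$.

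I expect the only genuine difficulty to be bookkeeping the powers of $\alpha$ and the commutations among $S$, $\alpha^{-1}$, $\cdot$ and $[\cdot,\cdot]$; there is no conceptual obstacle, as each axiom collapses, after the linearisation step, to a single application of \eqref{hom-poisson} together with skew-symmetry or commutativity. The step most prone to sign or twist errors is \eqref{dual-hom-pre-poisson-1}, where the bracket occupies the left slot and one must carefully use $[z,S(\alpha^{-1}(x))]=-\{x,z\}$ and $\alpha(S(\alpha^{-1}(y)))=S(y)$ to land exactly on the prescribed right-hand side.
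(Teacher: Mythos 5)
Your proposal is correct and follows essentially the same route as the paper's proof: invoke Lemmas \ref{average-Opreator-permutative} and \ref{average-Opreator-Leibniz} for the Hom-permutative and Hom-Leibniz structures, then verify each of \eqref{dual-hom-pre-poisson-1}--\eqref{dual-hom-pre-poisson-3} by rewriting the twisted operations through $S$ and applying \eqref{hom-poisson} together with skew-symmetry or commutativity. Your preliminary ``linearisation'' identities $S(x\bullet y)=S(x)\cdot S(y)$ and $S(\{x,y\})=[S(x),S(y)]$ are just a tidy repackaging of the same substitutions the paper performs inline.
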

\begin{proof}
By Lemma \ref{average-Opreator-permutative} and Lemma \ref{average-Opreator-Leibniz}, we know that $(A,\bullet,\alpha)$ is a Hom-permutative algebra and $(A,\{\cdot,\cdot\},\alpha)$ is a Hom-Leibniz algebra. By \eqref{hom-poisson}, \eqref{eq:average-opreator-1}, \eqref{eq:average-operator-2} and \eqref{eq:average-operator-4}, for all $x,y,z\in A$, we have
\begin{eqnarray}
\nonumber &&\{\alpha(x),y\bullet z\}-\{x,y\}\bullet \alpha(z)-\alpha(y)\bullet \{x,z\}\\
\nonumber&=&\{\alpha(x),S(\alpha^{-1}(y))\cdot z\}-[S(\alpha^{-1}(x)),y]\bullet \alpha(z)-\alpha(y)\bullet[S(\alpha^{-1}(x)),z]\\
\nonumber&=&[S(x),S(\alpha^{-1}(y))\cdot z]-S(\alpha^{-1}[S(\alpha^{-1}(x)),y])\cdot \alpha(z)-S(y)\cdot[S(\alpha^{-1}(x)),z]\\
\nonumber&=&[S(x),\alpha^{-1}(S(y))\cdot z]-[\alpha^{-1}(S(x)),\alpha^{-1}(S(y))]\cdot \alpha(z)-S(y)\cdot[\alpha^{-1}(S(x)),z]\\
\nonumber &=&0,
\end{eqnarray}
which implies that
\begin{equation}\label{dual-1}
\{\alpha(x),y\bullet z\}=\{x,y\}\bullet \alpha(z)+\alpha(y)\bullet \{x,z\}.
\end{equation}
Similarly, we have
\begin{eqnarray}
\label{dual-2}\{x\bullet y,\alpha(z)\}=\alpha(x)\bullet\{y,z\}+\alpha(y)\bullet\{x,z\},\\
\label{dual-3}\{x,y\}\bullet\alpha(z)+\{y,x\}\bullet\alpha(z)=0.
\end{eqnarray}
By \eqref{dual-1}, \eqref{dual-2} and \eqref{dual-3}, we obtain that $(A,\bullet,\{\cdot,\cdot\},\alpha)$ is a dual-Hom-pre-Poisson algebra.
\end{proof}

 \end{document}